\font\bbbld=msbm10 scaled\magstephalf
\newcommand{\bC}{\hbox{\bbbld C}}
\newcommand{\bR}{\hbox{\bbbld R}}
\newtheorem{theorem}{Theorem}[section]
\newtheorem{lemma}[theorem]{Lemma}
\newtheorem{proposition}[theorem]{Proposition}
 \theoremstyle{definition}
\newtheorem{definition}[theorem]{Definition}
\theoremstyle{remark}
\numberwithin{equation}{section}
\begin{document}
\setlength{\baselineskip}{1.2\baselineskip}

\title{H\"older continuous solutions to complex Hessian equations}

\author{Ngoc Cuong Nguyen}
\address{Institute of Mathematics, Jagiellonian University,
 \L ojasiewicza 6, 30-348 Krak\'ow, Poland. }
\email{Nguyen.Ngoc.Cuong@im.uj.edu.pl}

\begin{abstract}
We prove the H\"older continuity of the solution to complex Hessian equation
with the right hand side in $L^p$, $p>\frac{n}{m}$, $1< m< n$, in a
$m$-strongly pseudoconvex domain in $\bC^n$ under some additional conditions
on the density near the boundary and on the boundary data.

\end{abstract}

\maketitle

\bigskip

\section*{Introduction}
\label{Holder}
\setcounter{equation}{0}
Let $\Omega$ be an open bounded subset in $\bC^n$. For $1\leq m \leq n$, one considers
the Dirichlet problem with given $\phi \in C(\partial \Omega)$ and $f\in L^p(\Omega)$, $p>n/m$,
\begin{equation}
\label{heq}
\begin{cases}
	u \in SH_m \cap L^\infty(\Omega), \\
 	(dd^cu)^m\wedge \beta^{n-m}
	= f \beta^n & \text{ in } \;\; \Omega, \\
 	u= \phi & \text{ on } \;\; \partial \Omega,
\end{cases}
\end{equation}
where $SH_m(\Omega)$ is the set of $m$-subharmonic functions in $\Omega$,
$\beta = dd^c\|z\|^2$, and $d=\partial + \bar\partial$, $d^c = i(\bar \partial - \partial)$.
In the case
$m=1$ (resp. $m=n$) this equation is the Laplace equation for subharmonic
functions (resp. the complex Monge-Amp\`ere equation for plurisubharmonic functions).

The complex Monge-Amp\`ere equations have been investigated extensively over
last years. We refer the reader to \cite{Ce1}, \cite{K1}, \cite{EGZ}, \cite{PSS},
and references therein, for  accounts of recent results and more details.
We would like to emphasize here that the results on  H\"older continuity of  solutions
of complex Monge-Amp\`ere equations with the right hand side possibly degenerate
(see \cite{EGZ}, \cite{K3}, \cite{Demailly et al}) on compact K\"ahler manifolds
 turned out to be very useful in complex dynamic and complex geometry
(see e.g. \cite{DNS}, \cite{CDS}).

On the other hand, the complex Hessian equation is a rather new subject.  A
major progress has been done recently both for domains in $\bC^n$
(see \cite{Li}, \cite{Bl}, \cite{DK1}), and on compact K\"ahler manifolds,
(see \cite{HMW}, \cite{DK2}). In particular, the Calabi-Yau type theorem for complex
Hessian equations on a compact K\"ahler manifold was proved in \cite{DK2}.
It is expected to have some geometric applications, though not on the scale the  complex
Monge-Amp\`ere equations have.

The weak solution to complex Hessian equations have been studied
in \cite{Bl}, \cite{DK1}, \cite{Chi1, Chi2}, \cite{Cuong}.
It has been shown (see \cite{Bl},  \cite{DK1}, \cite{Chi1}) that
pluripotential theory can be adapted to $m$-subharmonic functions,
and it is a suitable tool for studying the weak solution to
complex Hessian equations with the right hand side in $L^p$, $p>n/m$.
Actually, Dinew and Ko\l odziej have obtained the continuous solution to the complex Hessian
equation for domains in $\bC^n$ (\cite{DK1}, Theorem 2.10) and for compact K\"ahler manifolds
(\cite{DK2}, Theorem 0.4).

In order to study the H\"older continuous solutions of the complex Hessian equation
on a general K\"ahler manifold it seems that the regularization techniques for
$\omega-$ $m$-subharmonic functions (\cite{DK1}, Definition 1.1)
 will play an important role (see \cite{K3}, \cite{Demailly et al}).  But in the case $1<m<n$,
 the problem of the regularization of non smooth $\omega-$ $m$-subharmonic functions
 for a general  K\"ahler form $\omega$  still needs to be solved.
Hence we restrict ourself to the case of domains in $\bC^n$ with the standard K\"ahler form
$\beta$.
Here we wish to study H\"older continuous solutions to \eqref{heq}
in a smoothly bounded, strongly $m$-pseudoconvex domain.

It is also motivated by the result in \cite{GKZ} for $m=n$,
where the equation \eqref{heq} becomes the complex Monge-Amp\`ere equation,
now considered in a strongly pseudoconvex domain.
Given $f\in L^p(\Omega)$, $p>1$, $\phi \in C(\partial \Omega)$ one seeks
$u$ such that
\begin{equation}
\label{ma}
\begin{cases}
	u \in PSH \cap L^\infty(\Omega), \\
 	(dd^cu)^n
	= f dV & \text{ in } \;\; \Omega, \\
 	u= \phi & \text{ on } \;\; \partial \Omega,
\end{cases}
\end{equation}
where $PSH(\Omega) \equiv SH_n(\Omega)$, $dV:=\beta^n$ is the Lebesgue measure.
It has been shown that (see \cite{K1})
the solution $u$ of \eqref{ma} is continuous.
Later on, in \cite{GKZ} the authors further showed that the
solution $u$ belongs to $Lip_\alpha(\bar \Omega)$,
$\alpha= \alpha (n, p)$, provided some additional assumptions on the
boundary data $\phi$ or on the Laplacian mass of $u$.

Our purpose is to prove the counterpart of the above result
for complex Hessian equations. More precisely, we want to show that
for $1<m<n$ the continuous solution $u$ to \eqref{heq} obtained in \cite{DK1}
is uniformly H\"older in $\Omega$, under some extra assumptions,
by using the potential theory developed in \cite{DK1} and suitable barrier arguments.
The main theorem is as follows.
\begin{theorem}
\label{mcor}
Let $\Omega$ be a smoothly bounded, strongly $m$-pseudoconvex domain, $1<m<n$.
Let $0\leq f\in L^p(\Omega), p>\frac{n}{m}$,  $\phi\in C^{1,1}(\partial\Omega)$,
and let $u$ be the solution of  \eqref{heq}.
\begin{enumerate}
 \item[(a)] If $f$ is bounded near the boundary $\partial \Omega$,
 	  then $u \in Lip_\alpha(\bar\Omega)$ for any
	  $0\leq \alpha < 2\gamma_1$;
\item[(b)] If $f(z)\leq C |\rho(z)|^{-m\nu}$ near $\partial \Omega$
	for some $C>0$, $0\leq \nu < \frac{1}{2}$, with $\rho$ being the
	defining function of $\Omega$ as in \eqref{rho},
	then $u\in Lip_\alpha(\bar \Omega)$ for any
	$0\leq \alpha< \gamma_2$,
\end{enumerate}
where $0< \gamma_1,\gamma_2 <\frac{1}{2}$  are uniform constants
defined in \eqref{se-gr}.
\end{theorem}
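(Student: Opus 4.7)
The plan is to follow the strategy pioneered by Kolodziej and refined in \cite{GKZ} for the Monge--Amp\`ere case, splitting the Hölder estimate of $u$ into \emph{two} pieces: a boundary modulus of continuity obtained by barrier construction, and an interior estimate obtained from a stability inequality combined with a regularization. Once these two ingredients are matched with a comparable exponent, one then extends the boundary modulus of continuity to all of $\bar\Omega$ by comparing $u$ with a translate--convolution of itself.

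First I would establish the boundary Hölder continuity. Since $\Omega$ is smoothly bounded and strongly $m$-pseudoconvex, one has a smooth defining function $\rho$ that is strictly $m$-subharmonic on $\bar\Omega$. Extending $\phi\in C^{1,1}(\partial\Omega)$ to a $C^{1,1}$ function $\tilde\phi$ on $\bar\Omega$, the function $\tilde\phi+A\rho$ is, for large $A$, a global $m$-subharmonic subsolution of \eqref{heq} in case (a), since $f$ being bounded near $\partial\Omega$ lets one absorb it into $A(dd^c\rho)^m\wedge\beta^{n-m}$. For an upper barrier one takes $\tilde\phi-A\rho$, which dominates $u$ on $\partial\Omega$ and is $m$-superharmonic. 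The maximum principle of \cite{DK1} then gives $|u(z)-\phi(\pi(z))|\leq C|\rho(z)|$ for $z$ near $\partial\Omega$, which upgrades $u$ to be Lipschitz up to $\partial\Omega$. For case (b), the same construction applies with $\tilde\phi+A|\rho|^{1-\nu}$, and the Laplacian of $|\rho|^{1-\nu}$ blows up at the rate needed to dominate the $|\rho|^{-m\nu}$ bound on $f$; here the boundary distance bound degrades to $|u(z)-\phi(\pi(z))|\leq C|\rho(z)|^{1-\nu}$ (after suitable rescaling), accounting for the weaker exponent $\gamma_2$.

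Next I would establish the interior Hölder estimate. Using the standard $\epsilon$-regularization
\begin{equation*}
u_\epsilon(z) := \sup_{|\zeta|\leq\epsilon}u(z+\zeta),\qquad z\in\Omega_\epsilon,
\end{equation*}
or its convolution analogue, $u_\epsilon$ is again $m$-subharmonic and decreases to $u$. To bound $\|u_\epsilon-u\|_{L^\infty(\Omega')}$ one extends $u_\epsilon$ to a competitor in the Dirichlet problem: one modifies it near $\partial\Omega$ using the already-established boundary Hölder control (which bounds $u_\epsilon - u$ near $\partial\Omega$ by $C\epsilon^\alpha$ for the exponent from Step~1), and then applies the $L^\infty$-stability theorem of Dinew--Ko\l{}odziej \cite{DK1} for the complex Hessian operator with $L^p$ right-hand side, $p>n/m$. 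That stability estimate yields
\begin{equation*}
\|u_\epsilon-u\|_{L^\infty(\Omega)}\leq C\,\|\max(u_\epsilon-u,0)\|_{L^q(\Omega)}^{\gamma}+(\text{boundary term})
\end{equation*}
for some exponent $\gamma>0$ tied to $p$ and $m$, and combining this with the trivial bound $\|u_\epsilon-u\|_{L^1}\leq C\epsilon^2\|\Delta u\|$ gives $\|u_\epsilon-u\|_{L^\infty}\leq C\epsilon^{2\gamma_1}$ in case (a), and $C\epsilon^{\gamma_2}$ in case (b). A standard argument (interpolating $u(z+\zeta)-u(z)\leq u_{|\zeta|}(z)-u(z)$) then converts this into the asserted Hölder bound.

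The main obstacle is the case (b), where the singularity of $f$ near $\partial\Omega$ simultaneously weakens the boundary modulus \emph{and} enters the $L^p$-norm controlling the stability constant, so one must choose the barrier exponent $1-\nu$, the regularization scale, and the stability exponent in a compatible way -- this is the source of the restriction $\nu<\tfrac12$ and of the precise exponents $\gamma_1,\gamma_2$ from \eqref{se-gr}. A secondary technical point is that $u_\epsilon$ is a priori only defined on $\Omega_\epsilon$, so one must patch it against the boundary barrier in an $m$-subharmonic-preserving way (e.g.\ by taking a regularized maximum with $\tilde\phi+A\rho$), which is where the $C^{1,1}$ regularity of $\phi$ and the smoothness of $\partial\Omega$ enter.
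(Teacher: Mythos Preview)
Your overall architecture---barrier to control $u$ near $\partial\Omega$, stability estimate applied to a regularization $u_\delta$, and gluing the two---is exactly the paper's strategy. Two points, however, are not right as stated and would leave genuine gaps if you tried to carry this out.

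First, you invoke the bound $\|\hat u_\delta-u\|_{L^1}\le C\delta^2\|\Delta u\|$ (and implicitly an $L^2$ analogue with $\|\nabla u\|_{L^2}$), but never explain why $\|\Delta u\|(\Omega)$ or $\|\nabla u\|_{L^2(\Omega)}$ is finite. This is not automatic for a merely continuous $m$-sh function, and the stability argument is vacuous without it. The paper closes this gap with a separate comparison inequality (Lemma~\ref{cp-2}): once $b\le u$ in $\Omega$ with $b=u$ on $\partial\Omega$, one has
\[
\int_\Omega dd^c u\wedge\beta^{n-1}\le\int_\Omega dd^c b\wedge\beta^{n-1},\qquad
\int_\Omega du\wedge d^c u\wedge\beta^{n-1}\le\int_\Omega db\wedge d^c b\wedge\beta^{n-1},
\]
so the barrier does double duty: it gives the boundary modulus \emph{and} it transfers its own Laplacian/gradient integrability to $u$. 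You need this step.

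Second, your diagnosis of why case~(b) yields only $\gamma_2$ instead of $2\gamma_1$ is off. You attribute it to the weaker boundary barrier $|\rho|^{1-\nu}$, but since $\nu<\tfrac12$ that barrier is still in $Lip_{1-\nu}\subset Lip_{1/2}$, and $\gamma_2<\tfrac12$, so the boundary exponent is never the bottleneck. The real reason is on the interior side: with the singular barrier $b_\nu=A\rho_\nu+h$ one has $\int_\Omega db_\nu\wedge d^c b_\nu\wedge\beta^{n-1}<\infty$ (this is precisely where $\nu<\tfrac12$ enters, via integrability of $|\rho|^{-2\nu}$), hence $\nabla u\in L^2$, but one does \emph{not} get $\|\Delta u\|(\Omega)<\infty$. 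Consequently one is forced to use the $r=2$ stability estimate together with $\|u_\delta-u\|_{L^2(\Omega_\delta)}\le C\delta\|\nabla u\|_{L^2}$, which gives $\delta^{\gamma}$ for $\gamma<\gamma_2$ rather than $\delta^{2\gamma}$ for $\gamma<\gamma_1$. In case~(a), by contrast, the barrier $A\rho+h$ has $\Delta b$ bounded, so $\|\Delta u\|(\Omega)<\infty$ and the $r=1$ estimate with $\|\hat u_\delta-u\|_{L^1}\le C\delta^2\|\Delta u\|$ yields the better exponent $2\gamma_1$.

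A minor point: your upper barrier ``$\tilde\phi-A\rho$ is $m$-superharmonic'' is not the right formulation; the paper simply uses the harmonic extension of $\phi$ as the upper bound (since $u$ is in particular subharmonic), which is cleaner.
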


Recently, L.H. Chinh \cite{Chi2} also studied the H\"older continuity of the solution
to \eqref{heq} for $1<m<n$  by the  viscosity method.
In particular, in connection with our results in the case of a domain in
$\bC^n$,  he proved H\"older continuity of solutions in the strongly pseudoconvex domains
with the right hand side being at least continuous in $\bar \Omega$.
However, compared to Theorem~\ref{mcor},
he has put much less regularity on the boundary data
$\phi$, namely he took $\phi$ in a H\"older continuous class.

The organization of the paper is as follows, in Section~\ref{msub}  basic notions
related to $m$-subharmonic functions are recalled.
Section~\ref{stability} deals with stability estimates.
The crucial inequality is Proposition~\ref{se-dk} due to Dinew and Ko\l odziej,
which fills the gap for the case $1<m<n$ in order to get Theorem~\ref{se-th-5}.
In Section~\ref{he},  we first prove a more general statement in Theorem
~\ref{main}, and then we verify that under assumptions of
 Theorem~\ref{mcor} one can apply this statement.
 In particular, Theorem~\ref{hr} will show that any H\"older continuous function
 on the boundary can be extended to a $m$-sh  H\"older continuous function in the  whole domain.

\bigskip

\bigskip

\section*{Acknowledgements} I am grateful to my advisor, professor S\l awomir Ko\l odziej,
 who has patiently and thoroughly read the preliminary versions of this work.
His remarks and enlightening suggestions helped a lot to improve the exposition of the paper.
This work is supported by the International Ph.D Program
{\em" Geometry and Topology in Physical Models "}.

\bigskip

\section{$m$-subharmonic functions}
\label{msub}
\setcounter{equation}{0}
We briefly recall basic notions concerning $m$-subharmonic functions. We refer the reader to \cite{Bl},
\cite{DK1}  for a more detailed account. Let $\Omega$ be a bounded open subset in $\bC^n$.
Let $\beta := dd^c \|z\|^2$ denote the standard K\"ahler form in $\bC^n$,
where $d = \partial + \bar \partial $ and $d^c = i(\bar \partial - \partial)$.

\subsection{m-subharmonic functions}
\label{m-sub}
For $1 \leq m \leq n$ one considers the positive symmetric cone
\begin{equation}
\label{po-co}
	\Gamma_m = \{ \lambda \in \bR^n : \sigma_1(\lambda)> 0, ..., \sigma_m(\lambda) >0\},
\end{equation}
where $\sigma_{k}(\lambda ):= \sum_{1 \leq i_1 < ...< i_j\leq n} \lambda_{i_1} ... \lambda_{i_j} $ are
the $k$-th elementary symmetric polynomials of $\lambda$.  These symmetric cones
are convex (see \cite{Ga}).
\begin{definition}
\label{su-de-1}
Let $u$ be a subharmonic function in $\Omega$. \\
{\bf (a)} For smooth case, $u \in C^2(\Omega)$ is called $m$-subharmonic (m-sh for short)
if the eigenvalue values of the complex Hessian matrix form a vector
$$
	\lambda [\frac{\partial^2 u}{\partial z_j \partial \bar z_k} (z)] \in  \bar \Gamma_m,
\mbox{ equivalently }
	[dd^c u (z)]^k \wedge \beta^{n-k}(z) \geq 0, 1\leq k \leq m,  \;\; \forall z\in \Omega.
$$
{\bf (b)} For non-smooth case, $u$ is called $m$-sh if for any collection of
$v_1, ..., v_{m-1}$  $C^2$-smooth $m$-sh functions (in the definition (a)) the inequality
$$
	dd^c u \wedge dd^c v_1 \wedge ... \wedge dd^c v_{m-1} \wedge \beta^{n-m} \geq 0
$$
holds in the weak sense of currents in $\Omega$. \\
The set of all $m$-sh functions is denoted by $SH_m(\Omega)$.
\end{definition}
Following the Bedford and Taylor construction the wedge products of currents given by locally
bounded $m$-sh functions are well defined (defined inductively, see also \cite{Bl}).
\begin{proposition}
\label{su-pr-2}
Let $u_1$, .., $u_m$ be bounded $m$-sh functions then the measure
$$
	dd^c u_1 \wedge ... \wedge dd^c u_m  \wedge \beta^{n-m}
$$
is nonnegative.
\end{proposition}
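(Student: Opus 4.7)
The plan is a two-step reduction: establish pointwise nonnegativity for smooth $m$-sh functions via the convex-cone structure of $\Gamma_m$, then pass to the bounded case by mollification and a Bedford--Taylor style convergence theorem. For the \textit{smooth case}, assume $u_j \in C^2(\Omega) \cap SH_m(\Omega)$ for each $j$, and fix $z_0 \in \Omega$. Writing the complex Hessian $(\partial^2 u_j / \partial z_p \partial \bar z_q)(z_0)$ as a Hermitian matrix $A_j$ whose eigenvalue vector $\lambda(A_j)$ lies in $\bar\Gamma_m$, a direct algebraic computation shows that the density of the top form $dd^c u_1 \wedge \cdots \wedge dd^c u_m \wedge \beta^{n-m}$ with respect to $\beta^n$ at $z_0$ equals, up to a positive combinatorial factor, the full polarization $\tilde\sigma_m(A_1, \ldots, A_m)$ of the $m$-th elementary symmetric function. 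By G{\aa}rding's inequality for hyperbolic polynomials applied to $\sigma_m$ (which is hyperbolic with respect to the identity, with positivity cone $\Gamma_m$), this polarization is nonnegative whenever $\lambda(A_j) \in \bar\Gamma_m$ for every $j$; hence the measure is pointwise nonnegative in the smooth case.

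For the \textit{bounded case}, take arbitrary bounded $u_j \in SH_m(\Omega)$, work on a subdomain $\Omega' \Subset \Omega$, and form the standard mollifications $u_j^{\varepsilon} = u_j * \rho_\varepsilon$. Convolution with a nonnegative radial kernel preserves $m$-subharmonicity (test against the smooth $m$-sh test functions appearing in Definition~\ref{su-de-1}(b)), so $u_j^\varepsilon \in C^\infty(\Omega') \cap SH_m(\Omega')$ and $u_j^\varepsilon \downarrow u_j$ pointwise as $\varepsilon \downarrow 0$. The inductive Bedford--Taylor type construction recalled immediately before the proposition then yields
\[
dd^c u_1^\varepsilon \wedge \cdots \wedge dd^c u_m^\varepsilon \wedge \beta^{n-m} \;\rightharpoonup\; dd^c u_1 \wedge \cdots \wedge dd^c u_m \wedge \beta^{n-m}
\]
weakly as measures on $\Omega'$. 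The inductive definition $dd^c u_k \wedge T_{k-1} := dd^c(u_k\, T_{k-1})$, where $T_{k-1}$ denotes the wedge product of the first $k-1$ factors, is legitimate because $u_k$ is bounded and $T_{k-1}$ already has measure coefficients. Since weak limits of nonnegative measures are nonnegative, the proposition follows.

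The \textit{main obstacle} is the pointwise algebraic step: showing $\tilde\sigma_m(A_1, \ldots, A_m) \geq 0$ for $\lambda(A_j) \in \bar\Gamma_m$. I would invoke G{\aa}rding's theorem directly, as is done in \cite{Bl}; alternatively one can argue by continuous deformation, proving the statement first in the open cone $\Gamma_m$ by deforming each $A_j$ to a positive definite matrix along a path in $\Gamma_m$ (using continuity of $\tilde\sigma_m$ and its nonvanishing on positive definite inputs) and then extending to $\bar\Gamma_m$ by continuity. Once the pointwise inequality is secured, the remaining convergence argument parallels the standard Bedford--Taylor machinery for plurisubharmonic functions.
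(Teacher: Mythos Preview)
The paper does not supply its own proof of this proposition; it is stated as a fact with an implicit reference to B{\l}ocki \cite{Bl}, and the monotone-convergence property is likewise only quoted from \cite{Bl} in the sentence following the statement. Your proposal is precisely the argument carried out in \cite{Bl}: G{\aa}rding's hyperbolicity theorem for $\sigma_m$ gives nonnegativity of the polarized form $\tilde\sigma_m(A_1,\ldots,A_m)$ on $\bar\Gamma_m$, which settles the smooth case pointwise, and then one passes to bounded $m$-sh functions by mollification and Bedford--Taylor type convergence.

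One point deserves more care. You justify the passage to the limit by appealing to ``the inductive Bedford--Taylor type construction recalled immediately before the proposition'' together with weak convergence under decreasing sequences. But in the paper's presentation the convergence statement comes \emph{after} the proposition and is phrased as a property of the already-known nonnegative measures; moreover, the very legitimacy of the inductive definition $dd^c u_k \wedge T_{k-1} := dd^c(u_k\,T_{k-1})$ hinges on $T_{k-1}$ being a closed \emph{positive} current (so that it has measure coefficients), which is exactly what you are proving. In \cite{Bl} this apparent circularity is dissolved by running an induction on the number of non-smooth factors: assuming the current with $k-1$ bounded factors and $m-k+1$ smooth factors is positive, one approximates the $k$-th factor and uses a Chern--Levine--Nirenberg type mass bound plus integration by parts to get convergence and hence positivity with $k$ bounded factors. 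Your sketch is correct in spirit, but you should organize it as this simultaneous induction rather than invoke convergence as a black box.
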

It can be shown (see \cite{Bl}) that these nonnegative measures are continuous under monotone or
uniform convergence of their potentials.

\subsection{ $m$-pseudoconvex domains}
\label{mpsc}
Let $\Omega$ be a bounded domain with $\partial\Omega$ in the class $C^2$. Let $\rho\in C^2$
in a neighborhood of $\bar\Omega$ be a defining function of $\Omega$,
 i.e. a function such that
$$
	\rho<0 \;\; \text{on} \;\; \Omega, \;\;\;\;  \rho
		= 0 \;\; \text{and}\;\; d\rho\ne 0 \;\; \text{ on } \;\; \partial\Omega.
$$
\begin{definition}
\label{pr-de-4}
A $C^2$ bounded domain is called strongly $m$-pseudoconvex
if there is a defining function $\rho$ and some $\sigma>0$ such that
$(dd^c\rho)^k\wedge\beta^{n-k}\geq \sigma\beta^n$ in $\bar{\Omega}$
for every $1\leq k\leq m$.
\end{definition}

Using the defining function $\rho$ above together with the regularity of the boundary data
one can state  the following result for subharmonic
functions. This proposition seems to be  classical. Since we could not
find an accurate reference (see \cite{HanLin}, Lemma 1.35 and \cite{BT1},
Theorem 6.2 for example), we include its proof, which is based on \cite{BT1},
for the convenience of the reader.
This proposition will be used in the proof of Lemma~\ref{he-le-3}.
 \begin{proposition}
 \label{hh}
Let $\Omega$ be a smoothly bounded (i.e. strongly $1$-pseudoconvex) domain
and $\phi \in Lip_{2\alpha}(\bar \Omega)$, $0 < \alpha \leq \frac{1}{2}$.
Then the upper envelope
$$
	h(z) = \sup \{ v(z): v\in SH(\Omega)\cap C(\bar \Omega), \;\; v_{|_{\partial \Omega}} \leq \phi\}
$$
belongs to $Lip_\tau(\bar \Omega)$ for every $0< \tau \leq 2\alpha < 1$
(or for every $0 < \tau <1$ when $2\alpha = 1$).
 \end{proposition}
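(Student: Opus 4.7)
The key observation is that the Perron envelope $h$ coincides with the classical Dirichlet harmonic extension $H$ of $\phi|_{\partial\Omega}$. Indeed $H$ is harmonic (hence subharmonic), continuous up to $\bar\Omega$ and equal to $\phi$ on $\partial\Omega$, so $H$ is admissible in the definition of $h$ and $H\le h$; conversely, for any admissible $v$ the difference $v-H$ is subharmonic with $v-H\le 0$ on $\partial\Omega$, so the maximum principle gives $v\le H$ in $\Omega$ and hence $h\le H$. Thus $h=H$, and the proposition reduces to the classical Hölder regularity of Dirichlet harmonic extensions of $Lip_{2\alpha}$ boundary data on a smoothly bounded domain.

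To produce this regularity in the style of \cite{BT1}, I would first extend $\phi$ to $\tilde\phi\in Lip_{2\alpha}(\bar\Omega)$ by Whitney extension and mollify: $\phi_\epsilon:=\tilde\phi*\chi_\epsilon$ satisfies $\|\phi_\epsilon-\tilde\phi\|_{L^\infty}\le c_1\epsilon^{2\alpha}$ and $\|\Delta\phi_\epsilon\|_{L^\infty}\le c_2\epsilon^{2\alpha-2}$. Strong $1$-pseudoconvexity provides $\sigma_0>0$ with $\Delta\rho\ge\sigma_0$ in $\bar\Omega$, so setting $A_\epsilon:=c_2\epsilon^{2\alpha-2}/\sigma_0$ makes $v_\epsilon^+:=\phi_\epsilon+A_\epsilon\rho$ subharmonic and $v_\epsilon^-:=\phi_\epsilon-A_\epsilon\rho$ superharmonic in $\Omega$. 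Since $v_\epsilon^+-c_1\epsilon^{2\alpha}\le\phi$ on $\partial\Omega$, it is admissible and $h\ge v_\epsilon^+-c_1\epsilon^{2\alpha}$; symmetrically, $v_\epsilon^-+c_1\epsilon^{2\alpha}$ is superharmonic with values $\ge\phi$ on $\partial\Omega$, so the minimum principle applied to the harmonic function $h=H$ yields $h\le v_\epsilon^-+c_1\epsilon^{2\alpha}$. Optimizing by choosing $\epsilon\sim|\rho(z)|^{1/2}$ produces the boundary control
\[
|h(z)-\tilde\phi(z)|\le C\,|\rho(z)|^{\alpha},\qquad z\in\bar\Omega.
\]

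To promote this bound to a Hölder estimate on all of $\bar\Omega$ with the claimed exponent $\tau\le 2\alpha$, combine it with the $Lip_{2\alpha}$ modulus of $\tilde\phi$ and interior Schauder-type gradient estimates for the harmonic function $h$ (cf.\ \cite{HanLin}, Lemma~1.35). A dyadic split handles any pair $z_1,z_2\in\bar\Omega$: when $|z_1-z_2|$ is comparable to $d(z_i,\partial\Omega)$ one reduces via a nearest-boundary-point triangle inequality, while in the deep-interior regime one integrates a gradient bound of the form $|\nabla h(z_0)|\le C\,d(z_0,\partial\Omega)^{2\alpha-1}$ along a short polygonal path. The borderline case $2\alpha=1$ gives the familiar $\epsilon$-loss in the exponent, reflecting the fact that Lipschitz boundary data need not yield Lipschitz harmonic extensions on smooth domains.

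The main obstacle is matching the Hölder exponent. The mollification–barrier step alone delivers only the half-exponent $\alpha$ in the $L^\infty$-control of $h-\tilde\phi$ near $\partial\Omega$, whereas the target regularity for the linear harmonic problem is the full $2\alpha$. Bridging this gap uses the linearity of the Laplacian essentially—through the interior Schauder estimates above, or equivalently through a Poisson-kernel splitting of $h=H$. No such sharpening is available for the nonlinear $m$-Hessian operator treated elsewhere in the paper, and this is the structural reason behind the constraint $\gamma_1,\gamma_2<\tfrac{1}{2}$ in Theorem~\ref{mcor}.
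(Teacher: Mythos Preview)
Your identification of $h$ with the Poisson extension $H$ is correct, and you are right that the proposition then reduces to classical linear theory. However, the sketch you give does not actually reach the full exponent. Your mollification barrier yields only $|h(z)-\tilde\phi(z)|\le C|\rho(z)|^{\alpha}$; combining this with the $Lip_{2\alpha}$ modulus of $\tilde\phi$ and the interior gradient estimate on a ball $B(z_0,d/2)$, $d=\mathrm{dist}(z_0,\partial\Omega)$, gives
\[
\mathrm{osc}_{B(z_0,d/2)}\, h \;\le\; C d^{\alpha} + C d^{2\alpha} \;\le\; C d^{\alpha},
\]
hence only $|\nabla h(z_0)|\le C d^{\alpha-1}$, not the $d^{2\alpha-1}$ you claim. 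Integrating along paths then produces $h\in Lip_\alpha(\bar\Omega)$, not $Lip_{2\alpha}$. To recover the missing factor one must invoke Kellogg's theorem or a genuine Poisson-kernel argument---but once that is granted, the entire barrier construction is superfluous and the proof becomes a citation of the classical result.

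The paper takes a different, fully self-contained route that avoids linear elliptic theory. For each $\xi\in\partial\Omega$ it builds the explicit one-sided barrier
\[
a_\xi(z)=K|\rho(z)|^{\tau}+M|z-\xi|^{2\alpha}+\phi(\xi),
\]
which is superharmonic in a collar of $\partial\Omega$ because the singular term $K|\rho|^{\tau}$ (with $|\nabla\rho|>0$ there) overcomes the subharmonicity of $M|z-\xi|^{2\alpha}$. After a cutoff extension and taking the infimum over $\xi$, this yields directly $|h(z)-h(\xi)|\le C|z-\xi|^{\tau}$ for all $z\in\bar\Omega$, $\xi\in\partial\Omega$ and any $\tau\le 2\alpha$---no half-exponent loss. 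The propagation to the interior uses no gradient estimate at all, but the Walsh translation trick: $V(z,w)=\max\{h(z+w)-C|w|^{\tau},\,h(z)\}$ is subharmonic with boundary values $\le\phi$, so $V\le h$ by the envelope definition, giving $h(z+w)-h(z)\le C|w|^{\tau}$. This argument needs only the maximum principle and the closure of subharmonic functions under $\max$ and translation; that is exactly why it transfers verbatim to the nonlinear $m$-subharmonic setting in Theorem~\ref{hr}, where your Schauder/Poisson-kernel route is unavailable.
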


Here, and in the whole note, we use the notation:
$$
	Lip_\alpha(\bar \Omega) = \{ v \in C(\bar \Omega) : \|v\|_\alpha <+\infty\},
$$
  for $0<\alpha <1$, where the $\alpha$-H\"older norm  is given by
\begin{equation}
\label{hnorm}
	\|v\|_\alpha:= \sup\{ |v(z)| : z\in \bar \Omega\} +
		\sup_{z \neq w} \{\frac{|v(z) -v(w)|}{|z-w|^\alpha}: z,w \in \bar \Omega\}.
\end{equation}
It is also convenient if we consider in the case $\alpha = 0$ the space of continuous
functions in $\bar \Omega$,
and in the case $\alpha = 1$ the space of Lipschitz continuous functions with
uniform Lipschitz constants in $\bar \Omega$.
\begin{proof}
It is classical fact that
$h$ is a harmonic function in $\Omega$  with the boundary value $\phi$, and
it belongs to $C(\bar\Omega)$. In the next step
we will construct  subharmonic and superharmonic barriers at a given point on the boundary.
Let $\rho$ be the strictly subharmonic defining function of $\Omega$.
\begin{lemma}
\label{hh1}
Suppose that $\|\phi\|_{2\alpha} = M$, and
$0< \tau <1$ such that $ \tau \leq 2\alpha$.
Given $\xi \in \partial \Omega$
there is a uniform constant $K=K(\phi , \Omega) > 0$ such that the function
$$
a_{ \xi }(z) =  K |\rho|^\tau(z) + M |z-\xi |^{2 \alpha} + \phi (\xi)
$$
is superharmonic  in $\Omega \cap W$, where $W$ is
a neighborhood  of $\partial \Omega$. Moreover,
it is equal to $\phi (\xi)$ at $\xi$, and $a_\xi (z) \geq \phi (z)$  for every $ z\in \partial \Omega$.
\end{lemma}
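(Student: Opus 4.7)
The plan is to verify in order the three claims: (i) $a_\xi(\xi)=\phi(\xi)$, (ii) $a_\xi\geq\phi$ on $\partial\Omega$, (iii) superharmonicity of $a_\xi$ in $\Omega\cap W$ for a fixed neighborhood $W$ of $\partial\Omega$. Items (i) and (ii) are immediate: at $\xi$ both $\rho(\xi)=0$ and $|z-\xi|=0$, while for $z\in\partial\Omega$ we have $\rho(z)=0$, and then the H\"older bound $|\phi(z)-\phi(\xi)|\leq M|z-\xi|^{2\alpha}$ delivers (ii) at once.

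The heart of the argument is (iii). A direct calculation on $\{\rho<0\}$ yields
\[
dd^c|\rho|^\tau = -2\tau(1-\tau)(-\rho)^{\tau-2}\,i\partial\rho\wedge\bar\partial\rho - \tau(-\rho)^{\tau-1}dd^c\rho.
\]
After wedging with $\beta^{n-1}$, both summands are strictly negative in a neighborhood $W$ of $\partial\Omega$: the first because $i\partial\rho\wedge\bar\partial\rho\wedge\beta^{n-1}$ is a positive multiple of $|\partial\rho|^2\beta^n$ and $|\partial\rho|^2\geq\delta>0$ on $W$ (using $d\rho\ne 0$ on $\partial\Omega$), the second because strict $1$-pseudoconvexity (Definition~\ref{pr-de-4} with $k=1$) gives $dd^c\rho\wedge\beta^{n-1}\geq\sigma\beta^n$. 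Consequently
\[
dd^c(K|\rho|^\tau)\wedge\beta^{n-1}\;\leq\;-Kc\,\tau(1-\tau)(-\rho)^{\tau-2}\beta^n
\]
in $W\cap\Omega$ for some $c=c(\Omega)>0$. A parallel calculation on $\bC^n\setminus\{\xi\}$ gives the matching upper bound $dd^c|z-\xi|^{2\alpha}\wedge\beta^{n-1}\leq C|z-\xi|^{2\alpha-2}\beta^n$, with the positive contribution $\alpha|z-\xi|^{2\alpha-2}\beta$ dominating. Thus (iii) reduces to the pointwise inequality
\[
Kc\,\tau(1-\tau)(-\rho)^{\tau-2}\;\geq\;CM|z-\xi|^{2\alpha-2}\quad\text{on } W\cap\Omega.
\]

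The main obstacle is matching these two singularities, which are anchored to different sets --- the left-hand side blows up on all of $\partial\Omega$, while the right-hand side concentrates only at $\xi$. The resolution uses the fact that $\rho\in C^2(\bar\Omega)$ vanishes at $\xi$, so that $|\rho(z)|\leq L|z-\xi|$ on $\bar\Omega$ with $L=\sup_{\bar\Omega}|\nabla\rho|$ independent of $\xi\in\partial\Omega$. Combined with the hypothesis $\tau\leq 2\alpha$ and the bounded diameter of $\Omega$, this yields
\[
(-\rho)^{\tau-2}\;\geq\;L^{\tau-2}\,\mathrm{diam}(\Omega)^{\tau-2\alpha}\,|z-\xi|^{2\alpha-2},
\]
so the required inequality holds as soon as $K$ is chosen large in terms of $M$, $L$, $\mathrm{diam}(\Omega)$, $\sigma$, $\delta$ and $\tau$; crucially, this $K$ is independent of $\xi$. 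This final comparison is precisely where the hypothesis $\tau\leq 2\alpha$ and the $C^2$-regularity of $\rho$ genuinely enter, and fixing $W$ to be any one-sided neighborhood of $\partial\Omega$ on which $|\partial\rho|^2\geq\delta$ makes the above bounds uniform.
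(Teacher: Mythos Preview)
Your proof is correct and follows essentially the same approach as the paper: both compute $dd^c|\rho|^\tau$ and $dd^c|z-\xi|^{2\alpha}$ directly, discard the favorable sign in each, and then compare the dominant singularities via the Lipschitz bound $|\rho(z)|\leq L|z-\xi|$, the hypothesis $\tau\leq 2\alpha$, the diameter bound, and the nondegeneracy $|\nabla\rho|\geq\delta$ near $\partial\Omega$ to choose $K$ uniformly in $\xi$. The only cosmetic difference is that you separate the two estimates before combining them, whereas the paper combines first and then factors out $|z-\xi|^{2(\alpha-1)}$.
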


\begin{proof}[Proof of Lemma~\ref{hh1}]
We have
$$
	dd^c (|\rho|^\tau) =
	       -	\tau |\rho|^{\tau -1} dd^c \rho - \tau (1-\tau) |\rho|^{\tau -2} d\rho \wedge d^c \rho \;
	       \mbox{ in } \Omega ,
$$
and
$$
	dd^c |z-\xi |^{2 \alpha} =  \alpha |z -\xi |^{2(\alpha -1)} dd^c |z -\xi |^2
		- \alpha (1- \alpha) |z-\xi|^{2(\alpha -2)} d |z-\xi |^2 \wedge d^c |z-\xi|^2.
$$
Hence,  we have, in $\Omega$,
$$
	dd^c a_\xi (z)  \wedge \beta^{n-1} (z)
		\leq -K \tau (1-\tau) |\rho|^{\tau -2} |\nabla\rho(z)|^2\beta^n(z)
			+M \alpha |z-\xi|^{2 (\alpha -1)} \beta^n(z).
$$
Furthermore, there exists $C>0$ such that
$$
	|\rho (z)| = | \rho(z) - \rho (\xi)| \leq C |z-\xi| \mbox{ for every } z\in \bar\Omega.
$$
Since $\tau -2 < 0$, it implies that, for $ z\in \Omega$,
\begin{equation}
\label{le3-1}
\begin{aligned}
	dd^c a_\xi (z)  \wedge \beta^{n-1}(z)
\; & \leq - 
		cst. K |z-\xi|^{\tau -2} |\nabla\rho(z)|^2\beta^n(z)
			+M \alpha |z-\xi|^{2 (\alpha -1)} \beta^n(z)   \\
\; & =  |z-\xi|^{2 (\alpha -1)} \left[
		M\alpha - 
				cst. K | z-\xi |^{\tau - 2\alpha} |\nabla \rho (z)|^2
							\right] \; \beta^n (z)\\
\;& \leq |z-\xi|^{2 (\alpha -1)} \left[
		M\alpha - 
				cst. K \; diam(\Omega)^{\tau - 2\alpha} |\nabla \rho (z)|^2
							\right] \; \beta^n (z) ,
\end{aligned}
\end{equation}
where $cst= C^{\tau -2 }  \tau (1-\tau)$, the last inequality follows from  the fact that
$\tau \leq 2 \alpha$ and  $ |z - \xi | \leq diam(\Omega) $.
As $\rho$ is a defining function of $\Omega$, $d\rho \neq 0$ on $\partial \Omega$,
one has $|\nabla \rho| >\varepsilon >0$
in a small neighborhood $W$ of $\partial \Omega$.
From \eqref{le3-1} we get that for $K>0$ big enough, independent of $\xi$,
$a_\xi (z)$ is superharmonic in $\Omega \cap W $.
The two latter properties follow from the formula for $a_\xi(z)$ and $M= \| \phi \|_{2\alpha}$.
\end{proof}

{\it End of Proof of Proposition~\ref{hh}.}
We may extend $a_\xi(z)$ to $\Omega$ as follows.
Let $U \subset \subset W$ be a neighborhood of $\partial \Omega$, and let
$0 \leq \chi \leq 1$
be a smooth cutoff function such that $\chi =1 $ in $\bar U$, and
$supp\, \chi \subset\subset W$. Since in $\Omega\setminus \bar U$ the function
$\chi (z) a_\xi(z)$ is smooth
the function
$\tilde a_\xi (z) = \chi (z) a_\xi (z)- K'\rho(z)$ is superharmonic in
$\Omega$  for $K'=K'(\Omega, \phi, \rho)>0$ big enough.
It is clear that $\tilde a_\xi \in Lip_\tau (\bar \Omega)$,
$\tilde a_\xi (z) \geq \phi(z)$ on $\partial \Omega$, and
$\tilde a_\xi(\xi) = \phi (\xi)$.
Finally, the superharmonic barrier is obtained by setting
$$
	a(z) := \inf \{ \tilde a_\xi (z) : \xi \in \partial \Omega \}.
$$
Then $- a(z)\in SH(\Omega) \cap Lip_\tau(\bar \Omega)$, and $a(z) = \phi(z)$
on $\partial \Omega$.
We have constructed a superharmonic function $a(z)$ in $\Omega$,
 and its boundary value is $\phi$.
Similarly, there is a subharmonic function
 $b \in SH(\Omega)\cap Lip_\tau (\bar \Omega)$  such that
 $b \leq h$ in $\Omega$, $b = \phi $ on $\partial \Omega$.
 According to the maximum principle, we have
 $$
 	b(z) \leq h(z) \leq a(z)
$$
in $\bar \Omega$. Therefore
\begin{equation}
\label{hh-1}
	|h(z) - h(\xi)| \leq K_1 |z-\xi|^\tau, \;\; K_1= K(a,b),
\end{equation}
for every  $z\in \bar \Omega$, $\xi \in \partial \Omega$. We will show that \eqref{hh-1} holds
for any $z, \xi \in \bar \Omega$.
For any small vector $w \in \bC^n$, define
$$
	V(z,w) =
		\begin{cases}
		   \max\{ h(z+w) - K_1|w|^\tau , h(z)\} \;\; & z, \; z+w \in \Omega ,\\
		   h(z) \;\; & z\in \bar \Omega, \;  z+w \notin \Omega.
		\end{cases}
$$
Observe that for all $ w$, the function $z\rightarrow V(z,w) \in SH(\Omega)$ by \eqref{hh-1},
and $V(\xi,w) = \phi(\xi)$ on
$\partial \Omega$. It follows that for all $z\in \Omega$, $V(z, w) \leq h(z)$. If $z+w \in \bar\Omega$,
this yields
$$
	h(z+w) - h(z) \leq K_1 |w|^\tau.
$$
Reversing the roles of $z+w$ and $z$, we obtain
$$
	|h(z+w) -h(z)| \leq K_1 |w|^\tau.
$$
Thus, $h \in Lip_\tau(\bar \Omega)$, and the proposition follows.
 \end{proof}

\subsection{Comparison principles}
\label{cp}
In next two sections,  we will need the following two comparison principles.
\begin{lemma}[Comparison principle]
\label{cp-1}
Let $\Omega$ be an open bounded subset in $\bC^n$.
For $u,v\in SH_m\cap  L^\infty(\Omega)$ satisfying
$\liminf_{\zeta\rightarrow z}(u-v)(\zeta)\geq 0$ for any $z\in \partial \Omega$, we have
$$
	\int_{\{u<v\}} (dd^cv)^m\wedge\beta^{n-m} \leq \int_{\{u<v\}} (dd^cu)^m\wedge\beta^{n-m}.
$$
Consequently, if $(dd^cu)^m\wedge\beta^{n-m}\leq (dd^cv)^m\wedge\beta^{n-m}$ in $\Omega$, then $v\leq u$ in $\Omega$.
\end{lemma}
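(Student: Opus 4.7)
The plan is to adapt the classical Bedford--Taylor comparison principle to the $m$-subharmonic setting. The key point is that the wedge products of the form $(dd^c w_1) \wedge \cdots \wedge (dd^c w_k) \wedge \beta^{n-m}$ for bounded $m$-sh potentials enjoy the same locality and Stokes-type integration-by-parts properties as in the plurisubharmonic case, since $\beta^{n-m}$ is a smooth closed form. This has been set up in \cite{Bl} (and recalled before Proposition~\ref{su-pr-2}), so I may argue exactly as in the classical proof.

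\emph{Step 1 (reduction to compact coincidence set).} I replace $v$ by $v_\varepsilon := v - \varepsilon$ for small $\varepsilon > 0$ and set $U_\varepsilon := \{u < v_\varepsilon\}$. The boundary hypothesis $\liminf_{\zeta \to z}(u-v)(\zeta) \geq 0$ guarantees that $\overline{U_\varepsilon} \subset\subset \Omega$. A monotone limit $\varepsilon \searrow 0$ will eventually recover the open set $\{u<v\}$.

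\emph{Step 2 (locality of the Hessian measure).} I introduce $w := \max(u, v_\varepsilon) \in SH_m \cap L^\infty(\Omega)$, which is still $m$-sh since $SH_m$ is stable under finite maxima. On the open set $U_\varepsilon$ one has $w \equiv v_\varepsilon$, hence by locality
\begin{equation*}
(dd^c w)^m \wedge \beta^{n-m} = (dd^c v)^m \wedge \beta^{n-m} \quad \text{on } U_\varepsilon ,
\end{equation*}
and on the open set $\Omega \setminus \overline{U_\varepsilon}$ one has $u > v_\varepsilon$, so $w \equiv u$ there and $(dd^c w)^m \wedge \beta^{n-m} = (dd^c u)^m \wedge \beta^{n-m}$.

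\emph{Step 3 (integration by parts).} Pick $\Omega'$ with $\overline{U_\varepsilon} \subset \Omega' \subset\subset \Omega$, so that $w \equiv u$ in a neighbourhood of $\partial\Omega'$. From the telescoping identity
\begin{equation*}
(dd^c w)^m - (dd^c u)^m = dd^c(w-u) \wedge \sum_{k=0}^{m-1}(dd^c w)^k \wedge (dd^c u)^{m-1-k} ,
\end{equation*}
wedged with the closed form $\beta^{n-m}$, and $m$-fold integration by parts over $\Omega'$ (legitimate because $w-u$ has compact support in $\Omega'$ and each complementary factor is a closed positive current), I obtain
\begin{equation*}
\int_{\Omega'}(dd^c w)^m \wedge \beta^{n-m} \; = \; \int_{\Omega'}(dd^c u)^m \wedge \beta^{n-m}.
\end{equation*}

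\emph{Step 4 (decomposition and limit).} I split both integrals as the sum of contributions over $U_\varepsilon$, $\partial U_\varepsilon$, and $\Omega' \setminus \overline{U_\varepsilon}$. The outside contributions cancel by Step~2, and dropping the nonnegative term $\int_{\partial U_\varepsilon}(dd^c w)^m \wedge \beta^{n-m} \geq 0$ yields
\begin{equation*}
\int_{U_\varepsilon}(dd^c v)^m \wedge \beta^{n-m} \; \leq \; \int_{\overline{U_\varepsilon}}(dd^c u)^m \wedge \beta^{n-m} \; \leq \; \int_{\{u<v\}}(dd^c u)^m \wedge \beta^{n-m},
\end{equation*}
using $\overline{U_\varepsilon} \subset \{u \leq v - \varepsilon\} \subset \{u<v\}$. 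Letting $\varepsilon \searrow 0$ and applying monotone convergence to $U_\varepsilon \nearrow \{u<v\}$ on the left-hand side gives the desired inequality. For the consequence: if $(dd^c u)^m \wedge \beta^{n-m} \leq (dd^c v)^m \wedge \beta^{n-m}$ and yet $\{u<v\} \ne \emptyset$, I perturb by $\tilde v := v + \delta(\|z\|^2 - R^2)$ with $R$ large and $\delta$ small enough so that $\tilde v \leq v$ on $\overline\Omega$ but $\{u < \tilde v\} \ne \emptyset$; the strictly larger Hessian mass of $\tilde v$ then contradicts the inequality just proved.

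The main technical point I expect to need care about is the Stokes-type integration by parts in Step~3 with merely bounded (not smooth) $m$-sh potentials; this is handled by the standard approximation (by smooth $m$-sh functions) and quasicontinuity machinery developed in \cite{Bl} and \cite{DK1}, which I would cite rather than redo.
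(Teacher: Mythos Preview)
Your argument is correct and is the standard Bedford--Taylor proof transported to the $m$-subharmonic setting. The paper, however, does not actually prove this lemma: its entire proof is the single line ``See \cite{Cuong}, Theorem 1.14 and Corollary 1.15.'' Your outline is presumably close to what appears in that companion reference, so there is no genuinely different route to compare.

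One small technical remark: you write as if $U_\varepsilon = \{u < v-\varepsilon\}$ is an open set, but for merely bounded (not continuous) $m$-sh functions $u,v$ this need not be Euclidean-open, so the locality claim in Step~2 and the clean decomposition in Step~4 require either (i) first proving the inequality for continuous $u,v$ and then passing to the limit via decreasing approximation, or (ii) invoking locality of the Hessian operator in the plurifine topology together with quasicontinuity. This is exactly the machinery from \cite{Bl}, \cite{DK1} that you already plan to cite at the end, so the gap is cosmetic rather than substantive --- just be aware that the issue surfaces in Steps~2 and 4 as well, not only in the integration by parts of Step~3.
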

\begin{proof}
See \cite{Cuong}, Theorem 1.14 and Corollary 1.15.
\end{proof}
\begin{lemma}
\label{cp-2}
Let $\Omega$ be an open bounded subset in $\bC^n$.
Let $u, v$ be continuous functions on $\bar \Omega$ and be $m$-sh functions in $ \Omega$.
Suppose that $u\leq v$ in $\Omega$ and $u=v$ on $\partial \Omega$. Then,
$$
	\int_\Omega dd^c v \wedge \beta^{n-1} \leq \int_\Omega dd^c u \wedge \beta^{n-1},
$$
$$
	\int_\Omega d v  \wedge d^c v \wedge \beta^{n-1}
		\leq \int_\Omega du\wedge d^c u \wedge \beta^{n-1}.
$$
\end{lemma}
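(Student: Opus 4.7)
The plan is to prove both inequalities first under the simplifying assumption that $u,v \in C^\infty(\bar\Omega)$, via Stokes' theorem and integration by parts, and then to remove this regularity by approximation. Throughout, write $T := \beta^{n-1}$ (a closed positive $(n-1,n-1)$-form) and set $w := v - u$, so that $w \in C(\bar\Omega)$ with $w \geq 0$ in $\Omega$ and $w = 0$ on $\partial\Omega$.

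For the first inequality in the smooth case, Stokes' theorem together with $dT = 0$ gives
\[
\int_\Omega dd^c w \wedge T = \int_{\partial\Omega} d^c w \wedge T.
\]
At each smooth boundary point, $w$ attains its minimum $0$, so its tangential differential along $\partial\Omega$ vanishes and $dw = \lambda\, d\rho$ with $\lambda = \partial_\rho w \leq 0$ (a Hopf-type observation: $w$ is nonnegative inward while $\rho$ increases outward). A direct computation in local real coordinates at a boundary point verifies that $d^c\rho \wedge \beta^{n-1}$ restricts to a strictly positive volume form on $\partial\Omega$. Hence $d^c w \wedge \beta^{n-1}|_{\partial\Omega}$ is pointwise nonpositive, and the boundary integral is $\leq 0$.

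For the second inequality, I would use the symmetry $du \wedge d^c w \wedge T = dw \wedge d^c u \wedge T$ for real-valued smooth $u, w$ (this holds because the $(2,0)$- and $(0,2)$-components of $du \wedge d^c w$ vanish when wedged with the $(n-1,n-1)$-form $T$, while the remaining $(1,1)$-part is symmetric in $u,w$). Expanding via $v = u + w$,
\[
\int dv \wedge d^c v \wedge T - \int du \wedge d^c u \wedge T = 2\int dw \wedge d^c u \wedge T + \int dw \wedge d^c w \wedge T.
\]
Since $w|_{\partial\Omega} = 0$ and $T$ is closed, integration by parts gives $\int dw \wedge d^c \psi \wedge T = -\int w\, dd^c \psi \wedge T$ for any smooth $\psi$. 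Applying this with $\psi = u$ and $\psi = w$, and using $dd^c w = dd^c v - dd^c u$, the right-hand side collapses to $-\int w\,(dd^c u + dd^c v) \wedge T$, which is $\leq 0$ because $w \geq 0$ and both $dd^c u \wedge \beta^{n-1}$, $dd^c v \wedge \beta^{n-1}$ are nonnegative measures by Proposition~\ref{su-pr-2}.

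The main technical obstacle is removing the smoothness assumption. A natural route is to mollify $u$ and $v$ on subdomains to obtain smooth $m$-sh sequences $u_\varepsilon \downarrow u$, $v_\varepsilon \downarrow v$ on any $\Omega' \subset\subset \Omega$; applying the smooth identities on an exhaustion $\Omega_k \uparrow \Omega$ produces boundary-mismatch terms on $\partial\Omega_k$ that one must show vanish as $k \to \infty$, using the uniform continuity of $u, v$ on $\bar\Omega$ and the identity $u = v$ on $\partial\Omega$. The weak continuity of the wedge products $dd^c(\cdot) \wedge \beta^{n-1}$ and $d(\cdot)\wedge d^c(\cdot) \wedge \beta^{n-1}$ under uniform convergence of bounded $m$-sh potentials (Proposition~\ref{su-pr-2}) would then transfer both inequalities to the limit; controlling the boundary contributions in this passage is the delicate point of the argument.
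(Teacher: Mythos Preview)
Your approach has two genuine gaps that make it substantially harder than necessary.

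First, the lemma is stated for an arbitrary open bounded $\Omega\subset\bC^n$; no regularity of $\partial\Omega$ is assumed. Your smooth-case argument for the first inequality relies on Stokes' theorem on $\Omega$, a defining function $\rho$, and a Hopf-type sign analysis of $d^c w\wedge\beta^{n-1}$ on $\partial\Omega$. None of this is available without a $C^1$ (at least) boundary. So the ``smooth case'' you set up is not just $u,v\in C^\infty(\bar\Omega)$ but also implicitly $\partial\Omega$ smooth, and you would still need to reduce the general case to that.

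Second, and more seriously, your approximation scheme does not close. Mollifying gives $u_\varepsilon,v_\varepsilon$ on shrinking subdomains $\Omega_\delta$, but there is no reason that $u_\varepsilon=v_\varepsilon$ on $\partial\Omega_\delta$, so your smooth-case identities do not apply to the pair $(u_\varepsilon,v_\varepsilon)$ on $\Omega_\delta$. You recognize this (``boundary-mismatch terms \ldots the delicate point''), but you do not indicate how to control those terms, and in fact doing so essentially requires an argument of the type the paper uses anyway.

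The paper bypasses both issues with a single elementary device: set $v_\varepsilon=\max\{v-\varepsilon,u\}$. Because $u,v$ are continuous on $\bar\Omega$ with $u=v$ on $\partial\Omega$, one has $v_\varepsilon=u$ in a full neighborhood of $\partial\Omega$; hence $v_\varepsilon-u$ has compact support in $\Omega$ and $\int_\Omega dd^c v_\varepsilon\wedge\beta^{n-1}=\int_\Omega dd^c u\wedge\beta^{n-1}$ with no boundary analysis whatsoever. Since $u\leq v$, one has $v_\varepsilon\nearrow v$, and weak$^*$ convergence of $dd^c v_\varepsilon\wedge\beta^{n-1}$ together with lower semicontinuity of the total mass on the open set $\Omega$ gives the inequality. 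The second inequality is handled the same way. This avoids smoothing $u,v$, avoids any assumption on $\partial\Omega$, and reduces the proof to a few lines.
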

\begin{proof}
The two inequalities are proved in the same way. We will only verify the first one.
 Set $v_\varepsilon = \max\{ v-\varepsilon , u\}$ for $\varepsilon >0$.
Since $u, v$ are continuous and $u=v$ on $\partial \Omega$,
one has $v_\varepsilon = u$ in a neighborhood of $\partial \Omega$. Hence
$$
	\int_\Omega dd^c v_\varepsilon  \wedge \beta^{n-1}
	= \int_\Omega dd^c u \wedge \beta^{n-1}.
$$
Moreover, $u\leq v$ in $\Omega$ it implies that $v_\varepsilon \nearrow v$ in $\Omega$.
Applying the monotone convergence theorem one obtains   weak* convergence
$dd^c v_\varepsilon \wedge \beta^{n-1} \rightarrow dd^c v \wedge \beta^{n-1}$. This implies
\begin{align*}
	\int_\Omega dd^c v \wedge \beta^{n-1}
\leq  \;& \liminf_{\varepsilon \rightarrow 0} \int_\Omega  dd^c v_\varepsilon \wedge \beta^{n-1}\\
= \;& \int_\Omega dd^c u \wedge \beta^{n-1} .
\end{align*}
The lemma follows.
\end{proof}

\bigskip

\section{Stability estimates}
\label{stability}
\setcounter{equation}{0}
In this section one considers $\Omega$ to be a bounded open set in $\bC^n$.
The main goal is to prove the stability estimate, Theorem~\ref{se-th-5},
in the case $1<m<n$.  The $m$-capacity, which is the version of the relative capacity
of plurisubharmonic functions
for $m$-sh functions, will play the analogous role in estimates as in the pluripotential case.
For $E$ a Borel set in $\Omega$ we define
$$
	cap_m(E,\Omega)
	= \sup \Big\{ \int_E (dd^cv)^m\wedge \beta^{n-m}: \;\; v\in SH_m(\Omega), 0\leq v\leq 1\Big\}.
$$
\begin{lemma}
\label{se-le-1}
Let $\varphi, \psi \in SH_m\cap L^\infty(\Omega)$ be such that
$\underline{lim}_{\zeta \rightarrow \partial\Omega} (\varphi - \psi)(\zeta) \geq 0$.
Then for all $t,s \geq 0$,
$$
	t^m cap_m (\{ \varphi - \psi <-s-t\},\Omega)
		 \leq \int_{\{\varphi - \psi< -s\}} (dd^c \varphi)^m \wedge \beta^{n-m}.
$$
\end{lemma}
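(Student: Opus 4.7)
The plan is to reduce the inequality to a single application of the comparison principle (Lemma~\ref{cp-1}) via the shifting trick that is standard in Bedford--Taylor / Ko\l odziej capacity estimates, now transposed to the $m$-sh setting.

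First I would fix an arbitrary competitor $v \in SH_m(\Omega)$ with $0 \leq v \leq 1$; the goal is to bound $\int_{\{\varphi - \psi < -s - t\}} (dd^c v)^m \wedge \beta^{n-m}$ by the right-hand side and then take the supremum over $v$. Set $\tilde\varphi := \varphi + s + t$ and $\tilde\psi := \psi + tv$; both lie in $SH_m \cap L^\infty(\Omega)$. The boundary hypothesis together with $tv \leq t$ gives
\[
(\tilde\varphi - \tilde\psi)(\zeta) = (\varphi - \psi)(\zeta) + s + t - tv(\zeta) \geq (\varphi - \psi)(\zeta) + s,
\]
so $\liminf_{\zeta \to \partial\Omega}(\tilde\varphi - \tilde\psi)(\zeta) \geq s \geq 0$, and Lemma~\ref{cp-1} applies to the pair $(\tilde\varphi, \tilde\psi)$.

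The crucial observation is the identity $\{\tilde\varphi < \tilde\psi\} = \{\varphi - \psi < tv - s - t\}$, which together with $0 \leq tv \leq t$ produces the sandwich
\[
\{\varphi - \psi < -s - t\} \;\subseteq\; \{\tilde\varphi < \tilde\psi\} \;\subseteq\; \{\varphi - \psi < -s\}.
\]
Applying Lemma~\ref{cp-1} and using $dd^c\tilde\varphi = dd^c\varphi$ yields $\int_{\{\tilde\varphi < \tilde\psi\}} (dd^c\tilde\psi)^m \wedge \beta^{n-m} \leq \int_{\{\tilde\varphi < \tilde\psi\}} (dd^c\varphi)^m \wedge \beta^{n-m}$. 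Expanding $(dd^c\tilde\psi)^m = (dd^c\psi + t\, dd^c v)^m$ by the binomial formula and invoking Proposition~\ref{su-pr-2} to see that every mixed term is a nonnegative measure, I may discard all terms except $t^m (dd^c v)^m \wedge \beta^{n-m}$; combining with the set inclusions above gives
\[
t^m \int_{\{\varphi - \psi < -s - t\}} (dd^c v)^m \wedge \beta^{n-m} \leq \int_{\{\varphi - \psi < -s\}} (dd^c\varphi)^m \wedge \beta^{n-m}.
\]
Taking the supremum over all admissible $v$ finishes the proof.

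I do not expect a serious obstacle; the whole argument is careful bookkeeping of shifts. The two delicate points are (i) the boundary inequality, which relies on $v \leq 1$ to absorb the shift $+t$, and (ii) getting both inclusions in the sandwich in the right direction, which uses $v \geq 0$ on one side and $v \leq 1$ on the other. Both are precisely what the normalization $0 \leq v \leq 1$ in the definition of $cap_m$ is designed to supply.
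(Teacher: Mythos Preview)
Your argument is correct and is essentially the same as the paper's: both fix a competitor $v$, apply the comparison principle (Lemma~\ref{cp-1}) to the pair $(\varphi+s+t,\,\psi+tv)$ (up to harmless constant shifts), use the positivity of the mixed Hessian measures to extract $t^m(dd^c v)^m\wedge\beta^{n-m}$, and then sandwich the sublevel sets via $0\le v\le 1$. The only cosmetic difference is that the paper normalizes $-1\le v\le 0$ while you use $0\le v\le 1$, which matches the definition of $cap_m$ given in the paper.
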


\begin{proof}
Take $-1 \leq v \leq 0$ a $m$-sh function in $\Omega$.
Since $\{ \varphi +s < \psi -t\} \subset \{ \varphi+ s < \psi + tv\}$,
by the comparison principle (Lemma~\ref{cp-1}),
\begin{equation*}
\label{se-eq-1}
\begin{aligned}
	 t^m \int_{\{ \varphi - \psi < -s-t\}} (dd^c v)^m \wedge \beta^{n-m}
	&= \int_{\{ \varphi +s < \psi -t\}} [dd^c (tv)]^m \wedge \beta^{n-m} \\
	& \leq \int_{\{ \varphi +s < \psi +tv\}} (dd^c tv+ \psi)^m \wedge \beta^{n-m} \\
	& \leq \int_{\{ \varphi +s < \psi +tv\}} (dd^c \varphi)^m \wedge \beta^{n-m} \\
	& \leq \int_{\{ \varphi +s < \psi\}} (dd^c \varphi)^m \wedge \beta^{n-m},
\end{aligned}
\end{equation*}
where the last inequality used $\{ \varphi +s < \psi +tv\} \subset \{ \varphi +s < \psi \}$.
\end{proof}

The following result is an important inequality, due to
Dinew and Ko\l odziej (see \cite{DK1}, Proposition 2.1),
between the Euclidean volume and the $m$-capacity of Borel sets.
\begin{proposition}
\label{se-dk}
Let $\Omega$ be a bounded open subset in $\bC^n$ and
$0 \leq \tau < \frac{n}{n-m}$.
Then there exists a constant $C=C(\tau)>0$ such that for any Borel subset
$E \subset \subset \Omega$,
\begin{equation}
\label{dk-ineq}
V(E) \leq C [cap_m (E,\Omega)]^\tau,
\end{equation}
where $V:=\beta^n$ is the volume form.
\end{proposition}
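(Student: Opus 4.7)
The plan is to pass from the Borel set $E$ to a canonical $m$-sh function associated with it — the relative extremal function — and then reduce the estimate to a Sobolev-type embedding for $m$-sh functions. First I would introduce
$$u_E(z) = \sup\{v(z) : v \in SH_m(\Omega),\ v \leq 0 \text{ in } \Omega,\ v \leq -1 \text{ on } E\},$$
and let $u_E^*$ be its upper semicontinuous regularization. The $m$-sh counterpart of the Bedford--Taylor theory developed in \cite{Bl} and \cite{DK1} yields $u_E^* \in SH_m(\Omega)$, $-1 \leq u_E^* \leq 0$, $u_E^* \equiv -1$ on $E$ off an $m$-polar set, and the identity
$$cap_m(E, \Omega) = \int_\Omega (dd^c u_E^*)^m \wedge \beta^{n-m}.$$
Since $|u_E^*| \geq 1$ on $E$ quasi-everywhere, one gets the trivial but important inequality $V(E) \leq \int_\Omega |u_E^*|^q\, dV$ for every $q > 0$.

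Next I would establish the main Sobolev-type estimate: for each $0 \leq q < \frac{nm}{n-m}$ there is $C = C(\Omega,q)$ with
$$\int_\Omega |u|^q\, dV \leq C \left(\int_\Omega (dd^c u)^m \wedge \beta^{n-m}\right)^{q/m}$$
for every $u \in SH_m(\Omega) \cap L^\infty(\Omega)$ with $-1 \leq u \leq 0$. Applied to $u_E^*$ with $q = m\tau$ — permissible because $\tau < n/(n-m)$ — this combined with the previous paragraph immediately gives $V(E) \leq C\,[cap_m(E,\Omega)]^\tau$.

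The Sobolev estimate is the heart of the proof and proceeds by iterated integration by parts. By approximation it suffices to consider smooth $m$-sh $u$. Stokes' theorem together with a Cauchy--Schwarz estimate for positive currents yields, for a closed positive $(n{-}1,n{-}1)$-current $T = \beta^{n-1-j} \wedge (dd^c u)^j$ and any integer $k \geq 1$,
$$\int_\Omega (-u)^k\, dd^c u \wedge T \;\leq\; C_k \int_\Omega (-u)^{k-1}\, du \wedge d^c u \wedge T.$$
Iterating, one trades Hessian powers of $u$ for gradient powers while redistributing the factors of $(-u)$, eventually reducing an integral against $(dd^c u)^m \wedge \beta^{n-m}$ to one against pure $\beta^n$ times a power of $|u|$. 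Combining with the classical Sobolev embedding for $u$ viewed as a subharmonic function on $\Omega \subset \bR^{2n}$ produces the exponent $nm/(n-m)$ by induction on $m$; the base case $m=1$ is the classical $L^q$-integrability of subharmonic functions for $q < n/(n-1)$.

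The delicate point, which I expect to be the main obstacle, is attaining (and not losing) the sharp exponent $nm/(n-m)$ during the iteration. In the pluripotential case $m=n$, Skoda's theorem gives essentially arbitrary integrability of psh functions, but here one has only a bounded Sobolev range, so each integration by parts must be balanced against a H\"older inequality whose exponents are pinned down by the target $nm/(n-m)$. In particular, the gain of one derivative per inductive step in $m$ must be matched precisely with the Sobolev exponent of $\bR^{2n}$, and the constant $C(\tau)$ blows up as $\tau \nearrow n/(n-m)$.
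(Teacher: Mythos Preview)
The paper does not supply its own proof of this proposition; it simply attributes the result to Dinew and Ko\l odziej \cite[Proposition~2.1]{DK1} and uses it as a black box. Your proposal is, in outline, exactly the strategy of \cite{DK1}: pass to the relative $m$-extremal function $u_E^*$, identify $cap_m(E,\Omega)$ with its Hessian energy $\int_\Omega (dd^c u_E^*)^m\wedge\beta^{n-m}$, and reduce the volume bound to a Sobolev--Poincar\'e inequality $\|u\|_{L^q}\le C\bigl(\int(dd^c u)^m\wedge\beta^{n-m}\bigr)^{1/m}$ for $q<nm/(n-m)$, proved by iterated Chern--Levine--Nirenberg type integration by parts. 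So there is nothing to compare against in the present paper; you are reproducing the original source.

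One point in your sketch is muddled and would need to be straightened out before the argument goes through. The displayed step $\int_\Omega(-u)^k\,dd^cu\wedge T\le C_k\int_\Omega(-u)^{k-1}\,du\wedge d^cu\wedge T$ is in fact an \emph{equality} (Stokes gives exactly the constant $k$, assuming $u$ vanishes on $\partial\Omega$), and as written it moves in the wrong direction for your goal: you need to dominate $\int_\Omega(-u)^{q}\,\beta^n$ (Hessian degree $0$) by a power of $\int_\Omega(dd^cu)^m\wedge\beta^{n-m}$ (Hessian degree $m$), so the iteration must \emph{increase} the number of $dd^cu$ factors while consuming powers of $(-u)$, not the reverse. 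In \cite{DK1} this is done by combining the Stokes identity with a Cauchy--Schwarz step against mixed forms $(-u)^a(dd^cu)^j\wedge\beta^{n-j}$ and then inducting on $j$; the exponent bookkeeping is exactly what forces the threshold $q<nm/(n-m)$, as you correctly anticipate. Your final paragraph shows you are aware this is the delicate part, but the intermediate display should be rewritten so that the chain of inequalities actually runs from $\beta^n$ up to $(dd^cu)^m\wedge\beta^{n-m}$.
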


It helps to obtain the following estimates in the case of $m$-subharmonic functions.

\begin{lemma}
\label{se-le-2}
Assume that $0 \leq f \in L^p(\Omega)$, $p > \frac{n}{m}$ and
$0< \alpha < \frac{p-\frac{n}{m}}{p(n-m)}$.
Then there exists a constant $C=C(\alpha,  \| f\|_{L^p(\Omega)})>0$ such that
$$
	\int_E f dV \leq C [cap_m (E, \Omega)]^{1+\alpha m}
$$
for every $E \subset \subset \Omega$.
\end{lemma}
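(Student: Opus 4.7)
The plan is to combine H\"older's inequality in the integral $\int_E f\,dV$ with the Dinew--Ko\l odziej volume/capacity inequality in Proposition~\ref{se-dk}, and then to check that the given range $0<\alpha<\frac{p-n/m}{p(n-m)}$ is exactly what one needs to make the exponents match.

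First, by H\"older's inequality applied to $f \cdot \mathbf{1}_E$ with conjugate exponents $p$ and $p/(p-1)$, one has
$$
\int_E f\, dV \;\leq\; \|f\|_{L^p(\Omega)} \cdot V(E)^{(p-1)/p}.
$$
Then Proposition~\ref{se-dk}, applied with some admissible exponent $\tau$ (to be chosen) satisfying $0\leq \tau < n/(n-m)$, yields a constant $C_\tau$ with
$$
V(E) \;\leq\; C_\tau \,[cap_m(E,\Omega)]^{\tau}.
$$
Combining the two, one gets
$$
\int_E f\,dV \;\leq\; \|f\|_{L^p(\Omega)} \, C_\tau^{(p-1)/p} \,[cap_m(E,\Omega)]^{\tau(p-1)/p}.
$$

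Next I would choose $\tau$ so that the capacity exponent equals $1+\alpha m$; namely, set
$$
\tau \;=\; \frac{p(1+\alpha m)}{p-1}.
$$
The only verification required is that this $\tau$ still lies in the range where Proposition~\ref{se-dk} applies, i.e.\ $\tau<\frac{n}{n-m}$. A short manipulation shows
$$
\frac{p(1+\alpha m)}{p-1} \;<\; \frac{n}{n-m}
\quad\Longleftrightarrow\quad
\alpha \;<\; \frac{p-n/m}{p(n-m)},
$$
which is precisely the assumption on $\alpha$. With this choice of $\tau$, the constant $C := \|f\|_{L^p(\Omega)} \, C_\tau^{(p-1)/p}$ depends only on $\alpha$ and $\|f\|_{L^p(\Omega)}$, and the claim follows.

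There is no real obstacle here: the only content is Proposition~\ref{se-dk}, which is quoted from \cite{DK1}. The role of the hypothesis $p>n/m$ is to guarantee that the upper bound $\frac{p-n/m}{p(n-m)}$ is strictly positive, so that a nontrivial range of $\alpha$ exists; the role of $\alpha<\frac{p-n/m}{p(n-m)}$ is exactly to keep $\tau$ below the critical exponent $n/(n-m)$ in Proposition~\ref{se-dk}.
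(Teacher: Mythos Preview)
Your proof is correct and is essentially identical to the paper's own argument: apply H\"older's inequality with conjugate exponents $p$ and $q=p/(p-1)$, then invoke Proposition~\ref{se-dk} with $\tau=(1+\alpha m)\frac{p}{p-1}$, noting that the hypothesis on $\alpha$ is equivalent to $\tau<\frac{n}{n-m}$. The only cosmetic difference is that the paper writes the H\"older exponent as $1/q$ while you write it as $(p-1)/p$.
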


\begin{proof}
Applying H\"older's inequality and then using \eqref{dk-ineq} with
$\tau = (1+ \alpha m)\frac{p}{p-1} < \frac{n}{n-m}$ we have
\begin{align*}
	 \int_{E} f dV
		& \leq \| f \|_{L^p(\Omega)} [V(E)]^\frac{1}{q}
			\leq C(\tau) \| f\|_{L^p(\Omega)} [cap_m(E, \Omega)]^\frac{\tau}{q}\\
		& \leq C (\alpha , \| f \|_{L^p( \Omega )})[cap_m(E, \Omega)]^{1+ \alpha m},
\end{align*}
where $\frac{1}{p}+\frac{1}{q} = 1$. Hence, the lemma follows.
\end{proof}

The following lemma was proved in \cite{EGZ}.

\begin{lemma}
\label{se-le-3}
Let $g: \bR^+ \rightarrow \bR^+$ be a decreasing right continuous function.
Assume there exists $\alpha> 0$ and $B>0$ such that
\begin{equation}
\label{se-le3-1}
	t g( s+t) \leq B [g(s)]^{1+ \alpha} \;\; \text{ for every } s, t \geq 0.
\end{equation}
Then $g(s) = 0$ for all $s \geq s_\infty$, where
$s_\infty:= \frac{2 B [g(0)]^{\alpha }}{1-2^{- \alpha}}$.
\end{lemma}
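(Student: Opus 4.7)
The plan is to run a De Giorgi–type iteration: construct an increasing sequence $s_j \nearrow s_\infty$ along which $g(s_j)$ decays geometrically to $0$, and then invoke monotonicity of $g$ to finish.

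Concretely, set $s_0 = 0$ and $s_{j+1} = s_j + t_j$, where the increments $t_j > 0$ will be chosen in a moment. The hypothesis \eqref{se-le3-1} applied with $s = s_j$, $t = t_j$ gives
\[
g(s_{j+1}) \;\leq\; \frac{B}{t_j}\,[g(s_j)]^{1+\alpha}.
\]
I would try to force the bound $g(s_j) \leq g(0)\,2^{-j}$ by induction. The base case $j=0$ is trivial, and the inductive step requires
\[
\frac{B}{t_j}\,[g(0)\,2^{-j}]^{1+\alpha} \;\leq\; g(0)\,2^{-(j+1)},
\]
which simplifies to $t_j \geq 2B\,[g(0)]^{\alpha}\,2^{-j\alpha}$. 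Hence I would take exactly $t_j := 2B\,[g(0)]^{\alpha}\,2^{-j\alpha}$. The telescoping sum then gives
\[
\lim_{j\to\infty} s_j \;=\; \sum_{j=0}^{\infty} t_j \;=\; 2B\,[g(0)]^{\alpha}\sum_{j=0}^{\infty} 2^{-j\alpha} \;=\; \frac{2B\,[g(0)]^{\alpha}}{1-2^{-\alpha}} \;=\; s_\infty,
\]
which is precisely the critical threshold in the statement — this match is the one small piece of cleverness, and choosing the geometric ratio $\tfrac{1}{2}$ for $g(s_j)$ is what makes the constants align.

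Finally, since $g$ is decreasing and $s_j \leq s_\infty$, we have $g(s_\infty) \leq g(s_j) \leq g(0)\,2^{-j}$ for every $j$, so $g(s_\infty) = 0$, and monotonicity forces $g(s) = 0$ for every $s \geq s_\infty$. Right continuity of $g$ plays no role in this argument (monotonicity alone suffices to pass from the $s_j$ to $s_\infty$ and beyond); I suspect it is stated only to match the shape of the $g$ that arises in applications, where $g(s) = \mathrm{cap}_m(\{\varphi - \psi < -s\},\Omega)$ or a similar super-level-set quantity. There is no real obstacle here — the entire content of the lemma is the bookkeeping of the two constants $2$ and $1-2^{-\alpha}$ coming from the geometric choices.
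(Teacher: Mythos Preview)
Your proof is correct and is exactly the standard De~Giorgi iteration argument underlying this lemma; the paper itself does not spell it out but simply refers to \cite{EGZ}, Lemma~2.4, noting only that here the inequality is assumed for all $t\geq 0$ rather than $0\leq t\leq 1$, which is what allows the explicit constant $s_\infty$. Your observation that right continuity is unused is also accurate.
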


\begin{proof}
See Lemma 2.4 in \cite{EGZ}. The additional point is that the condition \eqref{se-le3-1}
holds for every
$s, t \geq 0$ while in \cite{EGZ}  the assumptions are for every $s\geq 0$
and for every $ 0\leq t \leq 1$.
Therefore, we may compute $s_\infty$ as in the statement.
\end{proof}

By combining Lemma~\ref{se-le-1}, Lemma~\ref{se-le-2} and Lemma~\ref{se-le-3}, we get

\begin{proposition}
\label{se-pr-4}
Let $\varphi, \psi \in SH_m\cap L^\infty (\Omega)$ be such that
$\underline{lim}_{\zeta \rightarrow \partial\Omega} (\varphi - \psi)(\zeta) \geq 0$,
and  $0\leq f\in L^p(\Omega)$, $p> \frac{n}{m}$.
Suppose that $(dd^c \varphi)^m \wedge \beta^{n-m} = f \beta^n$ in $\Omega$
and $0<\alpha<  \frac{p-\frac{n}{m}}{p(n-m)}$.
Then there exists a constant $A = A(\alpha , \| f \|_{L^p(\Omega)})$
such that for all $\varepsilon >0$,
$$ \sup_\Omega (\psi - \varphi) \leq \varepsilon + A [cap_m(\{\varphi -\psi <-\varepsilon \}, \Omega)]^\alpha.
$$
\end{proposition}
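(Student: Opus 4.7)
The plan is to chain the three preceding lemmas into a single iteration on the super-level set capacities of $\psi-\varphi$. Define, for $s\geq 0$,
\[
	u(s) := cap_m(\{\varphi -\psi < -s\}, \Omega).
\]
Because $\liminf_{\zeta\to \partial\Omega}(\varphi-\psi)(\zeta)\geq 0$, for every $s>0$ the set $\{\varphi-\psi<-s\}$ stays away from $\partial\Omega$, in particular it is relatively compact in $\Omega$, so both Lemma~\ref{se-le-1} and Lemma~\ref{se-le-2} may be applied to it.

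First I would apply Lemma~\ref{se-le-1} together with the hypothesis $(dd^c\varphi)^m\wedge\beta^{n-m}=f\beta^n$ to obtain, for all $s,t\geq 0$,
\[
	t^m\, u(s+t) \;\leq\; \int_{\{\varphi-\psi<-s\}} f\, dV.
\]
Then I would bound the right-hand side by Lemma~\ref{se-le-2}, whose hypothesis on $\alpha$ is exactly the one assumed in the proposition, giving a constant $C_0 = C_0(\alpha, \|f\|_{L^p(\Omega)})$ such that
\[
	\int_{\{\varphi-\psi<-s\}} f\, dV \;\leq\; C_0\, u(s)^{1+\alpha m}.
\]
Combining these two inequalities and taking $m$-th roots, with $g(s) := u(s)^{1/m}$, yields
\[
	t\, g(s+t) \;\leq\; C_0^{1/m}\, g(s)^{1+\alpha m}\qquad (s,t\geq 0),
\]
which is precisely the hypothesis \eqref{se-le3-1} of Lemma~\ref{se-le-3}, with exponent $\alpha m$ and constant $B=C_0^{1/m}$.

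To get a bound that involves the truncation at $\varepsilon$, I would apply Lemma~\ref{se-le-3} not to $g$ itself but to the shifted function $g_\varepsilon(s):=g(s+\varepsilon)$, which satisfies the same inequality. The conclusion of Lemma~\ref{se-le-3} is then $g_\varepsilon(s)=0$ whenever
\[
	s \;\geq\; \frac{2 C_0^{1/m}}{1-2^{-\alpha m}}\, g(\varepsilon)^{\alpha m}
	\;=\; A\, u(\varepsilon)^{\alpha},
	\quad\text{where } A:=\frac{2 C_0^{1/m}}{1-2^{-\alpha m}}.
\]
Equivalently, $u(s_0)=0$ for $s_0 := \varepsilon + A\, [cap_m(\{\varphi-\psi<-\varepsilon\},\Omega)]^\alpha$. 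Finally, Proposition~\ref{se-dk} turns the vanishing of $cap_m$ into the vanishing of Lebesgue measure of $\{\varphi-\psi<-s_0\}$, and since $\psi$ and $\varphi$ are $m$-sh (in particular locally $L^1$ and determined off sets of Lebesgue measure zero via their usc representatives), this gives $\psi-\varphi\leq s_0$ on $\Omega$, which is the desired inequality.

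The only real obstacle is bookkeeping the exponent: one needs the $m$-th root step so that the inequality produced by Lemmas~\ref{se-le-1}--\ref{se-le-2} matches the exponent $1+\alpha m$ required by Lemma~\ref{se-le-3}, and one has to shift by $\varepsilon$ before iterating so that the right-hand side of the final bound involves the capacity of the truncated sub-level set rather than $u(0)$. The passage from the capacity-zero statement to a pointwise bound on $\sup_\Omega(\psi-\varphi)$ is standard once Proposition~\ref{se-dk} is invoked.
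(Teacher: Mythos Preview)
Your proposal is correct and follows essentially the same route as the paper: chain Lemma~\ref{se-le-1} with Lemma~\ref{se-le-2}, take $m$-th roots to match the format of Lemma~\ref{se-le-3}, and read off $s_\infty$ in terms of $g(0)$. The only cosmetic difference is that the paper builds the $\varepsilon$-shift into the definition of $g$ from the outset (setting $g(s)=[cap_m(\{\varphi-\psi<-\varepsilon-s\},\Omega)]^{1/m}$) rather than shifting at the end, and it passes directly from $cap_m=0$ to ``$\psi-\varphi\leq \varepsilon+s_\infty$ almost everywhere'' without explicitly citing Proposition~\ref{se-dk}.
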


\begin{proof}
Put $g(s):= [cap_m(\{ \varphi- \psi < -\varepsilon -s \}, \Omega)]^{\frac{1}{m}}$.
Applying in turn Lemma~\ref{se-le-1}  and Lemma~\ref{se-le-2}, we obtain
\begin{align*}
t^m cap_m (\{ \varphi - \psi < -\varepsilon -s - t \}, \Omega)
	& \leq \int_{\{ \varphi - \psi < -\varepsilon -s\}} (dd^c \varphi)^m \wedge \beta^{n-m} \\
	&=  \int_{\{ \varphi - \psi < -\varepsilon -s\}} f \beta^n \\
	& \leq C(\alpha , \|f\|_{L^p(\Omega)}) [cap_m (\{ \varphi -\psi <-\varepsilon -s \}, \Omega)]^{1 + \alpha m}.
\end{align*}
Now, taking the m-th root of two sides one gets that
\begin{equation}
\label{se-pr4-1}
t g(s+t) \leq B [g(s)]^{1+\alpha m} \;\; \text { with } \;\; B = C^\frac{1}{m} (\alpha , \|f\|_{L^p(\Omega)}).
\end{equation}
From \eqref{se-pr4-1} we see that $g(s)$ satisfies assumptions of Lemma~\ref{se-le-3}.
It tells us that $[g(s_\infty)]^m = cap_m(\{ \varphi - \psi < - \varepsilon - s_\infty \}, \Omega) = 0$,
 which means
$ \psi - \varphi \leq \varepsilon + s_\infty$ almost everywhere.
Finally, by inserting into formula  $s_\infty = \frac{2B [g(0)]^{\alpha m}}{1 - 2^{-\alpha m}}$  we obtain
$$
	\sup_\Omega (\psi - \varphi) \leq \varepsilon +
		A \left [ cap_m(\{ \varphi -\psi <- \varepsilon \}, \Omega) \right ]^\alpha,
$$
where $A = \frac{2B}{1 - 2^{-\alpha m}}$.
\end{proof}

We are now in the position to prove the main stability estimate
which is similar to Theorem 1.1 in \cite{GKZ} for $m=n$ (see also \cite{DK1}, Theorem 2.5 in the case $1<m<n$).
In order to simplify the notation, from now on when $p>\frac{n}{m}$,
$1<m<n$, $\frac{1}{p}+\frac{1}{q} =1$ we set
\begin{equation}
\label{se-gr}
 \gamma_r := \frac{r}{ r+ mq +\frac{pq(n-m)}{p-\frac{n}{m}}}, \;\; \mbox{ for } r\geq 1.
\end{equation}

\begin{theorem}[Stability estimate]
\label{se-th-5}
Let $\varphi , \psi \in SH_m\cap L^\infty (\Omega)$ be such that
$\underline{lim}_{\zeta \rightarrow \partial\Omega} (\varphi - \psi)(\zeta) \geq 0$,
and
$0\leq f \in L^p(\Omega)$, $p> \frac{n}{m}$.
Suppose that $(dd^c\varphi)^m\wedge \beta^{n-m} = f\beta^n$ in $\Omega$.
Fix $r\geq 1$ and $0 < \gamma < \gamma_r$.
Then there exists a constant $C=C(\gamma, \|f\|_{L^p(\Omega)}) >0$ such that
$$ \sup_\Omega (\psi -\varphi) \leq C \left [\| (\psi - \varphi)_+\|_{L^r(\Omega)}\right ]^\gamma,
$$
where $(\psi -\varphi)_+ = \max \{\psi -\varphi , 0\}$.
\end{theorem}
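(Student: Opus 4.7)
Write $U(s) := \{\varphi - \psi < -s\}$. Since $\liminf_{\zeta \to \partial\Omega}(\varphi-\psi)(\zeta) \geq 0$, the set $U(s)$ is open and relatively compact in $\Omega$ for every $s>0$, so the $m$-capacity and the volume there are well-defined and the Borel subset hypothesis of Proposition~\ref{se-dk} and Lemma~\ref{se-le-2} applies. The starting point is Proposition~\ref{se-pr-4}, which for any $0<\alpha<(p-\tfrac{n}{m})/(p(n-m))$ and any $\varepsilon>0$ gives
\begin{equation*}
\sup_\Omega(\psi-\varphi)\ \leq\ \varepsilon\ +\ A\bigl[cap_m(U(\varepsilon),\Omega)\bigr]^{\alpha},
\end{equation*}
with $A=A(\alpha,\|f\|_{L^p})$. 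The whole game is therefore to bound $cap_m(U(\varepsilon),\Omega)$ by a suitable power of $a:=\|(\psi-\varphi)_+\|_{L^r(\Omega)}$ and then optimize $\varepsilon$.

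\textbf{Step 1: capacity bound via the hypothesis on $\varphi$.} Apply Lemma~\ref{se-le-1} with $s,t>0$ and $s+t=\varepsilon$ to get
\begin{equation*}
t^m\, cap_m(U(s+t),\Omega)\ \leq\ \int_{U(s)} (dd^c\varphi)^m\wedge\beta^{n-m}\ =\ \int_{U(s)} f\,\beta^n.
\end{equation*}
By H\"older's inequality $\int_{U(s)} f\,\beta^n \leq \|f\|_{L^p(\Omega)}\, V(U(s))^{1/q}$. Since $U(s)\subset \{(\psi-\varphi)_+>s\}$, Chebyshev gives $V(U(s))\leq s^{-r} a^{r}$, hence
\begin{equation*}
cap_m(U(\varepsilon),\Omega)\ \leq\ \frac{\|f\|_{L^p}}{t^m\, s^{r/q}}\, a^{r/q}.
\end{equation*}

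\textbf{Step 2: optimize.} Take $s=t=\varepsilon/2$ and insert into Proposition~\ref{se-pr-4}:
\begin{equation*}
\sup_\Omega(\psi-\varphi)\ \leq\ \varepsilon\ +\ C_1\,\varepsilon^{-\alpha(m+r/q)}\, a^{\alpha r/q},
\end{equation*}
where $C_1$ depends on $\alpha, \|f\|_{L^p}$. Choosing $\varepsilon$ to balance the two terms, i.e.\ $\varepsilon = a^{\alpha r/[q(1+\alpha(m+r/q))]}$, yields
\begin{equation*}
\sup_\Omega(\psi-\varphi)\ \leq\ C_2\, a^{\alpha r/[q+\alpha(mq+r)]}\ =\ C_2\, a^{r/[\,r+mq+q/\alpha\,]}.
\end{equation*}
Letting $\alpha\nearrow (p-\tfrac{n}{m})/(p(n-m))$ makes $q/\alpha\searrow pq(n-m)/(p-\tfrac{n}{m})$, so the exponent tends to $\gamma_r$ as defined in \eqref{se-gr}. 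For any prescribed $\gamma<\gamma_r$, fix $\alpha$ close enough to the upper bound to make the produced exponent at least $\gamma$; this gives the desired estimate with $C=C(\gamma,\|f\|_{L^p})$.

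\textbf{Where the work is.} The only subtle steps are the bookkeeping of constants (to make sure the dependence is only on $\gamma$ and $\|f\|_{L^p}$, not on $\varphi,\psi$) and the verification that the optimization really yields an exponent approaching $\gamma_r$. Everything else is an assembly of Proposition~\ref{se-pr-4}, Lemma~\ref{se-le-1}, H\"older and Chebyshev; the key input of the Dinew--Ko\l odziej volume--capacity inequality (Proposition~\ref{se-dk}) enters through Lemma~\ref{se-le-2}, already hidden inside Proposition~\ref{se-pr-4}.
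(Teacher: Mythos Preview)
Your argument is correct and follows essentially the same route as the paper: Lemma~\ref{se-le-1} plus H\"older/Chebyshev to bound the capacity of the sublevel set, then Proposition~\ref{se-pr-4}, then a choice of $\varepsilon$ to balance the two terms. The paper applies Lemma~\ref{se-le-1} with $s=t=\varepsilon$ and then Proposition~\ref{se-pr-4} at level $2\varepsilon$, while you do the reverse bookkeeping (Proposition~\ref{se-pr-4} first, then Lemma~\ref{se-le-1} with $s=t=\varepsilon/2$); this is the same computation up to harmless constants.

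One small imprecision: in your last paragraph you say you pick $\alpha$ so that the resulting exponent is ``at least $\gamma$''. Since $a=\|(\psi-\varphi)_+\|_{L^r}$ is not assumed $\leq 1$, an exponent strictly larger than $\gamma$ would not immediately give $a^{\gamma}$ on the right. The clean fix (and what the paper does) is to note that the map $\alpha\mapsto r/[r+mq+q/\alpha]$ is a continuous increasing bijection from $\bigl(0,\tfrac{p-n/m}{p(n-m)}\bigr)$ onto $(0,\gamma_r)$, so for any $\gamma<\gamma_r$ you can choose $\alpha$ (explicitly, $\alpha=\gamma q/[r-\gamma(mq+r)]$) making the exponent exactly $\gamma$; equivalently, skip the optimization and simply plug $\varepsilon=a^{\gamma}$ into your displayed inequality with this $\alpha$.
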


\begin{proof}
We follow the lines of the proof of Theorem 1.1 in \cite{GKZ}.
Applying Lemma~\ref{se-le-1}  with $s=t = \varepsilon >0$ and then using H\"older's inequality, we get
\begin{align*}
cap_m(\{\varphi -\psi <-2\varepsilon \}, \Omega)
	& \leq \varepsilon^{-m} \int_{\{\varphi-\psi<-\varepsilon\}} f dV  \\
	& \leq \varepsilon^{-m-\frac{r}{q}} \int_{\Omega} (\psi -\varphi)_+^\frac{r}{q} f dV \\
	& \leq \varepsilon^{-m-\frac{r}{q}}\|f\|_{L^p(\Omega)} \|(\psi -\varphi)_+\|_{L^r(\Omega)}^\frac{r}{q}.
\end{align*}
Fix $0< \alpha < \frac{p-\frac{n}{m}}{p (n-m)}$ to be chosen later. Applying Proposition~\ref{se-pr-4} we have
\begin{equation}
\label{se-th5-1}
 \sup_\Omega ( \psi - \varphi)
		\leq 2 \varepsilon
		+ A \varepsilon^{- \alpha (m+\frac{r}{q})}
		\|f\|_{L^p(\Omega)}^\alpha \|(\psi - \varphi)_+\|_{L^r(\Omega)}^{\frac{\alpha r}{q}},
\end{equation}
where $A=A(\alpha , \| f \|_{L^p(\Omega)})$.
Now, we choose $\varepsilon = \|(\psi -\varphi_+)\|_{L^r(\Omega)}^\gamma$
and $\alpha = \frac{\gamma q}{r - \gamma (mq +r)}$ which is well defined,
the condition $0 < \gamma < \gamma_r$ being equivalent to
$0< \alpha < \frac{p -\frac{n}{m}}{p(n-m)}$.
Then the inequality \eqref{se-th5-1} becomes
$$
	\sup_\Omega ( \psi - \varphi)
		\leq A \|f\|_{L^p(\Omega)}^\alpha \|(\psi - \varphi)_+\|_{L^r(\Omega)}^\gamma .
$$
Thus, the theorem is proved.
\end{proof}

\bigskip

\section{H\"older continuity of the solution}
\label{he}
\setcounter{equation}{0}
Let $\Omega$ be a smooth bounded,
strongly $m$-pseudoconvex domain in $\bC^n$, $1< m < n$. We consider the Dirichlet
problem for the complex Hessian equation in the class of  $m$-sh functions.
\begin{equation}
\label{heq2}
\begin{cases}
 	(dd^cu)^m\wedge \beta^{n-m}
	= f \beta^n & \text{ in } \;\; \Omega, \\
 	u= \phi & \text{ on } \;\; \partial \Omega.
\end{cases}
\end{equation}

From the recent result of Dinew and Ko\l odziej (see \cite{DK1}, Theorem 2.10)
we know that $u\in SH_m(\Omega)\cap C(\bar \Omega)$ when
$f\in L^p(\Omega)$, $p>n/m$ and $\phi \in C(\partial \Omega)$.
After establishing the stability estimates in Section~\ref{stability},
we may use the scheme of the proof in \cite{GKZ}
in order to obtain further the H\"older continuity of the solution $u$ to \eqref{heq2}
under some additional assumptions.

\begin{theorem}
\label{main}
Let $0\leq f\in L^p(\Omega), p>\frac{n}{m}$, and $\phi\in C(\partial\Omega)$.
Let $u$ be the continuous solution to \eqref{heq2}.
Suppose that there exists $b\in Lip_\nu(\bar\Omega)$, $0<\nu<1$,
such that $b\leq u$ in $\Omega$, $b=u$ on $\partial\Omega$.
\begin{enumerate}
\item[(a)] If  $\nabla u$ belongs to $L^2(\Omega)$,
		then $u\in Lip_\alpha(\bar\Omega)$ for any
		$0 \leq \alpha<\min\{\nu,\gamma_2\}$.
\item[(b)] If the total mass of  $\Delta u$ is finite,
		then $u\in Lip_\alpha(\bar\Omega)$ for any
		 $0 \leq \alpha < \min\{\nu,2\gamma_1\}$. \label{main-2}
\end{enumerate}
Where $\gamma_1, \gamma_2$ are defined in \eqref{se-gr}.
\end{theorem}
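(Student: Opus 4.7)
The plan is to adapt the Guedj--Ko\l odziej--Zeriahi scheme of \cite{GKZ} from the complex Monge--Amp\`ere setting. Two ingredients are needed: a boundary H\"older modulus for $u$ obtained from the barrier $b$ together with a matching upper barrier, and an interior comparison of $u$ with its translates via the stability estimate Theorem~\ref{se-th-5}, whose $L^r$-defect is controlled by the extra hypothesis (a) or (b).

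\textbf{Step 1 (boundary modulus).} The subbarrier gives $u(z)-\phi(\xi)\ge b(z)-b(\xi)\ge -C|z-\xi|^\nu$ for $z\in\bar\Omega$ and $\xi\in\partial\Omega$. For the reverse direction, $u$ is subharmonic since $m\ge 1$, so $u\le h$ where $h$ is the harmonic extension of $\phi=b|_{\partial\Omega}\in Lip_\nu(\partial\Omega)$; Proposition~\ref{hh} with $2\alpha=\nu$ gives $h\in Lip_\nu(\bar\Omega)$ and hence the matching upper bound. Combining, $|u(z)-u(\xi)|\le K|z-\xi|^\nu$ for all $z\in\bar\Omega$, $\xi\in\partial\Omega$.

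\textbf{Step 2 (interior modulus via stability).} For a small $w\in\bC^n$ with $\delta:=|w|$, the translate $u_w(z):=u(z+w)$ is $m$-sh on $\Omega_w:=\Omega\cap(\Omega-w)$. Define
\[
\tilde u_w(z):=\max\{u_w(z)-K\delta^\nu,\;u(z)\}\text{ on }\Omega_w,\qquad \tilde u_w(z):=u(z)\text{ on }\Omega\setminus\Omega_w.
\]
By Step 1, whenever $z$ or $z+w$ lies within an $O(\delta)$-strip of $\partial\Omega$ one has $u_w(z)-K\delta^\nu\le u(z)$, so $\tilde u_w\equiv u$ in a neighborhood of $\partial\Omega$ and $\tilde u_w\in SH_m(\Omega)\cap L^\infty(\Omega)$ with $\liminf_{\zeta\to\partial\Omega}(u-\tilde u_w)(\zeta)\ge 0$. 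Theorem~\ref{se-th-5} applied with $\varphi=u$, $\psi=\tilde u_w$ yields, for every $r\ge 1$ and every $\gamma<\gamma_r$,
\[
\sup_\Omega(u_w-u)\;\le\;K\delta^\nu+\sup_\Omega(\tilde u_w-u)\;\le\;K\delta^\nu+C\,\|(u_w-u)_+\|_{L^r(\Omega_w)}^{\gamma}.
\]

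\textbf{Step 3 (integral defects).} In case (a), take $r=2$: the translation inequality gives $\|u_w-u\|_{L^2(\Omega_w)}^{2}\le\delta^{2}\|\nabla u\|_{L^2(\Omega)}^{2}$, whence $\sup_\Omega(u_w-u)\le C\delta^{\min(\nu,\gamma)}$ for any $\gamma<\gamma_2$. In case (b), take $r=1$: subharmonicity of $u$ together with finiteness of $\|\Delta u\|_{M}$ yields the second-difference bound $\int_{\Omega_w}|u_w-u|\,dV\le C\delta^{2}\|\Delta u\|_{M}$, giving $\sup_\Omega(u_w-u)\le C\delta^{\min(\nu,2\gamma)}$ for any $\gamma<\gamma_1$. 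Replacing $w$ by $-w$ produces the two-sided bound $|u(z+w)-u(z)|\le C\delta^{\alpha}$ on $\Omega_w$ with the claimed $\alpha$, and the $V(z,w)$ max-gluing trick from the end of the proof of Proposition~\ref{hh} extends the modulus of continuity to all of $\bar\Omega$.

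\textbf{The main obstacle} is the $\delta^{2}$ factor in case (b): the trivial bound $\|u_w-u\|_{L^1}\le|w|\,\|\nabla u\|_{L^1}$ only yields $\delta$ and would give the exponent $\gamma_1$ instead of the sharper $2\gamma_1$. One must exploit the fact that a finite Laplacian mass controls the \emph{second} difference $u(z+w)+u(z-w)-2u(z)$, via the Newton/Green potential representation of subharmonic functions; such second-difference estimates are classical but require a careful accounting of boundary contributions. Once this estimate is in hand, the rest of the argument is routine assembly.
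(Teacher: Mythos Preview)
Your argument for part (a) is essentially the paper's, with single translates $u_w$ in place of the running supremum $u_\delta(z)=\sup_{|\zeta|\le\delta}u(z+\zeta)$; this cosmetic difference does not matter, since the boundary barrier (your Step~1 is the paper's Lemma~\ref{he-le-3}) and the $L^2$ translation estimate are the same.

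Part (b), however, has a genuine gap. The bound you write, $\int_{\Omega_w}|u_w-u|\,dV\le C\delta^{2}\|\Delta u\|$, is false: for the harmonic linear function $u(z)=\fRe\,z_1$ one has $\|\Delta u\|=0$ while $\int|u_w-u|\,dV$ is of order $\delta$. You acknowledge this and propose to control instead the \emph{second} difference $u(z+w)+u(z-w)-2u(z)$, but two problems remain. First, a directional second difference is governed by $\partial_w^2 u$, not by $\Delta u$; take $u(z)=\fRe\,z_1^2$, which is harmonic yet has $\partial_{x_1}^2u=2$, so $\|\Delta u\|$ cannot bound it. Second, even granting such a bound, the function $\tilde u_w$ you feed into Theorem~\ref{se-th-5} is built from the \emph{first} difference $u_w-u$, so a second-difference estimate does not plug into your Step~2 as written.

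The paper's remedy is to replace the translate by the ball average
\[
\hat u_\delta(z)=\frac{1}{v_{2n}\delta^{2n}}\int_{|\zeta-z|\le\delta}u(\zeta)\,dV,
\]
which is again $m$-subharmonic (a convolution against a nonnegative radial kernel, $\beta$ being constant) and for which the subharmonic Jensen formula gives directly
\[
\int_{\Omega_\delta}(\hat u_\delta-u)\,dV\;\le\;c_n\,\delta^{2}\,\|\Delta u\|_\Omega.
\]
Averaging over all directions is precisely what converts second-order information into Laplacian control. One then runs your Step~2 with $\hat u_\delta$ in place of $u_w$ and $r=1$ in Theorem~\ref{se-th-5}, obtaining $\sup_{\Omega_\delta}(\hat u_\delta-u)\le c\,\delta^{\min\{\nu,2\gamma\}}$ for every $\gamma<\gamma_1$. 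A final short lemma (the paper's Lemma~\ref{he-le-1}, taken from \cite{GKZ} and using only subharmonicity) shows that a H\"older bound for $\hat u_\delta-u$ on $\Omega_\delta$ is equivalent to one for $u_\delta-u$, which finishes~(b).
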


It is not too difficult to see that when the total mass of $\Delta u$ is finite,
$\nabla u \in L^2(\Omega)$. However, the H\"older exponent in
Theorem~\ref{main}-(b) is better than the one in Theorem~\ref{main}-(a),
namely $\gamma_2 < 2 \gamma_1 $.
If we put some extra assumptions on the growth of the
density $f$ near the boundary and on the boundary data $\phi$,
 then we may verify the assumptions of Theorem~\ref{main}, which is the content of
 the main theorem (Theorem~\ref{mcor}).


\begin{proof}[ Proof of Theorem~\ref{main}]
For a fixed $\delta>0$, we set
\begin{equation*}
\label{he-omegadelta}
 	\Omega_\delta := \{ z\in \Omega : dist(z, \partial \Omega )>\delta \};
\end{equation*}
\begin{equation}
\label{he-maxu}
	u_\delta(z) :=\sup_{\|\zeta\|\leq \delta} u(z+\zeta),\;\; z\in \Omega_\delta;
\end{equation}
\begin{equation}
\label{he-mu}
 \hat u_\delta (z)
 	:= \frac{1}{v_{2n}\delta^{2n}} \int_{|\zeta -z|\leq \delta} u(\zeta) dV_{2n}(\zeta),
	 \;\; z \in \Omega_\delta,
\end{equation}
where $v_{2n}$ is the volume of the unit ball in $\bC^n$.
The following lemma shows that the H\"older norm  (see \eqref{hnorm}) of $u$  in $\bar\Omega$
can be computed by using either \eqref{he-maxu} or \eqref{he-mu}.

\begin{lemma}
\label{he-le-1}
Given $0<\alpha<1$, the following two conditions are equivalent.
\begin{enumerate}
 \item[(i)] There exists $\delta_1 , A_1> 0$ such that for any $0<\delta \leq \delta_1$,
$$
	u_\delta - u \leq A_1 \delta^\alpha \;\; \text{ on } \;\; \Omega_\delta.
$$
 \item[(ii)] There exists $\delta_2 , A_2>0$ such that for any $0<\delta \leq \delta_2$,
$$
	\hat u_\delta - u \leq A_2 \delta^\alpha \;\; \text{ on } \;\; \Omega_\delta.
$$
\end{enumerate}
\end{lemma}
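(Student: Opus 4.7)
The forward implication (i) $\Rightarrow$ (ii) is immediate. For $z \in \Omega_\delta$, averaging the pointwise bound $u(z+\zeta) - u(z) \leq u_\delta(z) - u(z) \leq A_1 \delta^\alpha$ (valid for $|\zeta|\leq\delta$) over $|\zeta|\leq \delta$ yields
\[
\hat u_\delta(z) - u(z) = \frac{1}{v_{2n}\delta^{2n}}\int_{|\zeta|\leq\delta}\bigl[u(z+\zeta)-u(z)\bigr]dV_{2n}(\zeta) \leq A_1\delta^\alpha,
\]
so (ii) holds with $A_2 = A_1$ and $\delta_2 = \delta_1$.

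For the reverse implication (ii) $\Rightarrow$ (i), the plan is to use the sub-mean-value inequality, which is available because $u$ is subharmonic (being $m$-subharmonic in the sense of Definition~\ref{su-de-1} implies $1$-subharmonic). After adding an additive constant to make $u\geq 0$ on $\Omega$---an operation that affects neither (i) nor (ii)---fix $z\in\Omega_{2\delta}$ and $w\in B(z,\delta)$. The inclusion $B(w,\delta)\subset B(z,2\delta)$ together with nonnegativity of $u$ and the submean property give
\[
u(w) \leq \hat u_\delta(w) = \frac{1}{v_{2n}\delta^{2n}} \int_{B(w,\delta)} u\,dV_{2n} \leq 2^{2n}\hat u_{2\delta}(z).
\]
Invoking (ii) at $z$ with radius $2\delta$ then gives $\hat u_{2\delta}(z)\leq u(z)+A_2(2\delta)^\alpha$, and rearranging,
\[
u_\delta(z) - u(z) \leq (2^{2n} - 1)\, u(z) + 2^{2n} A_2 (2\delta)^\alpha.
\]

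The main obstacle is the parasitic term $(2^{2n}-1)u(z)$, which is of order $\|u\|_\infty$ rather than $\delta^\alpha$, so the raw chain above would only produce a uniform bound, not the desired modulus. To eliminate this constant I would refine the middle step by comparing $\hat u_\delta(w)$ directly with $\hat u_\delta(z)$ via the symmetric-difference bound
\[
|\hat u_\delta(w) - \hat u_\delta(z)| \leq \frac{C\|u\|_\infty}{\delta}\, |w-z|,
\]
and then iterate this comparison along a dyadic sequence of scales $\delta_k = 2^{-k}\delta_0$, telescoping the Lipschitz contributions so that the exponent $\alpha$ supplied by (ii) is preserved rather than degraded to $\alpha/(1+\alpha)$. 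The delicate balancing of the geometric factor $2^{2n}$ against the $\alpha$-loss across scales is the heart of the argument.
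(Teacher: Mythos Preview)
Your implication (i) $\Rightarrow$ (ii) is correct. The reverse implication is where the content lies, and your argument there is genuinely incomplete: you correctly isolate the parasitic term $(2^{2n}-1)u(z)$, but the repair you sketch does not close the gap. With the crude bound
\[
|\hat u_r(w)-\hat u_r(z)|\ \le\ \frac{C\|u\|_\infty}{r}\,|w-z|
\]
that you write down, optimising in $r$ gives only the exponent $\alpha/(1+\alpha)$, and iterating does \emph{not} improve it, because the constant $\|u\|_\infty$ on the right never shrinks---each pass of the iteration reproduces the same inequality. So the ``dyadic telescoping'' you allude to cannot work as stated; you would be stuck at $\alpha/(1+\alpha)$ forever.

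What makes the bootstrap genuine is to replace $\|u\|_\infty$ in the symmetric-difference estimate by the one-sided modulus itself. Set $\omega_+(\delta):=\sup_{\Omega_\delta}(u_\delta-u)$. Subtracting $u(z)$ before estimating the two crescent integrals and using that, for $\zeta$ in either crescent, $|u(\zeta)-u(z)|\le \omega_+(2r)$, one gets for $|w-z|=t\le r$
\[
\hat u_r(w)-\hat u_r(z)\ \le\ C_n\,\frac{t}{r}\,\omega_+(2r).
\]
Combined with $u(w)\le\hat u_r(w)$ and hypothesis (ii), this yields, with $r=Nt$,
\[
\omega_+(t)\ \le\ \frac{C_n}{N}\,\omega_+(2Nt)\ +\ A_2 N^{\alpha}\,t^{\alpha}.
\]
Because $\alpha<1$ one may fix $N$ so large that $C_n(2N)^{\alpha}/N<1$; the recursion is then a contraction passing from large scales to small, and starting from the trivial bound $\omega_+(\delta_2)\le 2\|u\|_\infty$ one obtains $\omega_+(t)\le A_1 t^{\alpha}$ with a uniform $A_1$. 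This is precisely the step your outline promises but does not deliver.

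For comparison, the paper itself does not give a self-contained argument: it simply refers to Lemma~4.2 of \cite{GKZ}, noting that only the subharmonicity of $u$ is used.
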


\begin{proof}
See Lemma 4.2 in \cite{GKZ} where its proof used only the subharmonicity.
\end{proof}

The assumption $\nabla u \in L^2(\Omega)$ (resp. $\| \Delta u \| (\Omega) <+\infty$)
will  enable us to control the growth of
$\| u_\delta - u\|_{L^2(\Omega_\delta)}$ (resp. $\| \hat u_\delta - u\|_{L^1(\Omega_\delta)}$).

\begin{lemma}
\label{he-le-2}
For $\delta >0$ small enough, we have inequalities
\begin{equation}
\label{he-le2-1}
	\int_{\Omega_\delta} |u_\delta(z) - u(z)|^2dV_{2n}(z)
		\leq c_n \|\nabla u\|_{L^2(\Omega)}^2 \; \delta^2,
\end{equation}
\begin{equation}
\label{he-le2-2}
	 \int_{\Omega_\delta} [\hat u_\delta(z) - u(z)] dV_{2n}(z)
	 	\leq c_n \|\Delta u\|_{\Omega} \; \delta^2,
\end{equation}
where $c_n>0$ depends only on $n$.
\end{lemma}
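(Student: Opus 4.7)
The plan is to reduce both inequalities to the smooth case via the standard mollification $u_\epsilon := u * \rho_\epsilon$, which is smooth and subharmonic (since $SH_m \subset SH_1$), satisfies $u_\epsilon \searrow u$, and for which $\|\nabla u_\epsilon\|_{L^2(\Omega')} \leq \|\nabla u\|_{L^2(\Omega)}$ and $\|\Delta u_\epsilon\|(\Omega') \leq \|\Delta u\|(\Omega)$ on compact subsets $\Omega'$ of $\Omega$. I would prove each inequality for $u_\epsilon$ and then pass to the limit $\epsilon \to 0$ by monotone convergence.

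For \eqref{he-le2-2} I would use the classical spherical-mean identity $\partial_r M(z,r) = \sigma_{2n-1}^{-1} r^{1-2n} (\Delta u)(B(z,r))$, where $M(z,r)$ denotes the mean of $u$ on $\partial B(z,r)$. Integrating twice and combining with $\hat u_\delta(z) = 2n \delta^{-2n} \int_0^\delta r^{2n-1} M(z,r)\,dr$ yields the representation
\[
\hat u_\delta(z) - u(z) = \frac{1}{\sigma_{2n-1}\,\delta^{2n}} \int_0^\delta \frac{\delta^{2n} - s^{2n}}{s^{2n-1}}\,(\Delta u)(B(z,s))\,ds.
\]
Integrating in $z \in \Omega_\delta$ and swapping the order of integration via Fubini, the inner integral is controlled by $\int_{\Omega_\delta} (\Delta u)(B(z,s))\,dV(z) \leq v_{2n} s^{2n} \|\Delta u\|(\Omega)$, and the remaining $s$-integral contributes $O(\delta^{2n+2})$, yielding the desired $c_n \delta^2 \|\Delta u\|(\Omega)$ bound.

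The estimate \eqref{he-le2-1} is more delicate because $u_\delta$ is defined by a supremum. The key observation is that for each fixed $z \in \Omega_{2\delta}$, the function $\zeta \mapsto w(\zeta) := (u(z+\zeta) - u(z))_+$ is non-negative and subharmonic on the ball $B(0, 2\delta)$ (the positive part of a subharmonic function is subharmonic), and hence so is $w^2$. The sub-mean value inequality applied on balls $B(\zeta_0, \delta) \subset B(0, 2\delta)$ for $|\zeta_0| \leq \delta$ gives
\[
(u_\delta(z) - u(z))^2 = \sup_{|\zeta_0|\leq\delta} w^2(\zeta_0) \leq \frac{C_n}{\delta^{2n}} \int_{B(0, 2\delta)} (u(z+\zeta) - u(z))^2 d\zeta.
\]
Integrating over $z \in \Omega_{2\delta}$ and swapping the order of integration, the problem reduces to the standard $W^{1,2}$ translation estimate $\int_{\Omega_{2\delta}}(u(z+\zeta) - u(z))^2 dV(z) \leq |\zeta|^2 \|\nabla u\|_{L^2(\Omega)}^2$, which combined with $\int_{B(0, 2\delta)} |\zeta|^2 d\zeta = O(\delta^{2n+2})$ produces the bound $C \delta^2 \|\nabla u\|_{L^2}^2$ on $\Omega_{2\delta}$; the thin collar $\Omega_\delta \setminus \Omega_{2\delta}$ of volume $O(\delta)$ is absorbed using the uniform continuity of $u$ on $\bar\Omega$ for $\delta$ small enough.

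The main obstacle---and the step where subharmonicity is used crucially---is this sub-mean conversion of the pointwise supremum defining $u_\delta$ into an $L^2$-average of translation differences. Once this is in place, the rest is a routine Fubini-plus-translation computation.
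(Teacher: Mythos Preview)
The paper itself does not give a proof of this lemma; it simply cites Lemma~4.3 and the proof of Theorem~3.1 in \cite{GKZ}, noting that only subharmonicity is used. So your write-up is being judged on its own merits.

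Your argument for \eqref{he-le2-2} is correct and is essentially the standard Jensen--Riesz computation: the representation formula for $\hat u_\delta(z)-u(z)$ is right, and the Fubini step with $\int_{\Omega_\delta}(\Delta u)(B(z,s))\,dV(z)\le v_{2n}s^{2n}\|\Delta u\|(\Omega)$ gives the claimed $c_n\delta^2$ bound after the elementary $s$-integration.

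For \eqref{he-le2-1} your core idea is sound: writing $w(\zeta)=(u(z+\zeta)-u(z))_+$, observing that $w$ and $w^2$ are subharmonic (as $\max(\cdot,0)$ of a subharmonic function, and then $t\mapsto t^2$ is convex increasing on $[0,\infty)$), and using the sub-mean-value inequality to trade the pointwise supremum for an $L^2$-average is a clean device. Combined with the translation estimate and Fubini, this correctly yields
\[
\int_{\Omega_{2\delta}}|u_\delta-u|^2\,dV \le c_n\,\delta^2\,\|\nabla u\|_{L^2(\Omega)}^2.
\]
The gap is your treatment of the collar $\Omega_\delta\setminus\Omega_{2\delta}$. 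Invoking ``uniform continuity of $u$'' only gives $\int_{\text{collar}}(u_\delta-u)^2\le C\,\omega(\delta)^2\cdot\delta$, where $\omega$ is the modulus of continuity of $u$. For this to be $\le c_n\|\nabla u\|_{L^2}^2\delta^2$ with $c_n$ depending \emph{only on $n$} you would need $\omega(\delta)\lesssim\delta^{1/2}$, which is exactly the H\"older regularity the whole section is trying to establish. So as written, the constant in \eqref{he-le2-1} would depend on $u$, not just on $n$, and the argument is circular for the intended application.

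Two easy remedies: either (i) simply state and prove the estimate on $\Omega_{2\delta}$ rather than $\Omega_\delta$ --- this is entirely sufficient for the proof of Theorem~\ref{main}, since one can carry out the gluing in Lemma~\ref{he-le-3} on $\Omega_{2\delta}$ with the same barrier argument --- or (ii) follow the GKZ argument, which proceeds differently (via a direct pointwise estimate that does not require doubling the collar). Either way, drop the uniform-continuity sentence.
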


\begin{proof}
See Lemma 4.3 and the last part in the proof of Theorem 3.1 in \cite{GKZ}.
There, only the subharmonicity was needed.
\end{proof}

In view of Lemma~\ref{he-le-1} and Lemma~\ref{he-le-2}
we wish to apply the stability estimate, Theorem~\ref{se-th-5},
to $\varphi:=u$ and $\psi:= u_\delta$.
This will give us the H\"older norm estimate in $\bar\Omega$ of the solution $u$,
in terms of $L^2$ norm of its gradient or its Laplacian mass in $\Omega$,
using \eqref{he-le2-1} or \eqref{he-le2-2}.
The remaining thing that we need is extending
$u_\delta$ to $\Omega$ (since it is only defined on $\Omega_\delta$),
in such a way that
after the extension the H\"older norm of $u$ is still under control.
It will be done with the help of the barrier function $b\in Lip_\nu(\bar\Omega)$,  $0 <\nu <1$.

\begin{lemma}
\label{he-le-3}
Under the assumptions of Theorem~\ref{main},
there exists a constant $c_0=c_0(b,\Omega)>0$ and $\delta_0$ small enough such that for any
$0<\delta< \delta_0$
\begin{equation}
\label{he-le3-1}
 	u_\delta(z) \leq u(z)+ c_0 \delta^{\nu} \;\; \text{for every} \;\; z\in \partial \Omega_\delta,
\end{equation}
where $\nu$ is the H\"older exponent of $b$. Consequently, the function
\begin{equation}
\label{he-le3-2}
 	\tilde u_\delta
 		= \begin{cases}
			\max\{u_\delta, u+ c_0 \delta^\nu\} & \text{ in } \;\; \Omega_\delta, \\
                                   u + c_0 \delta^\nu & \text{ in } \;\; \Omega\setminus \Omega_\delta
                     \end{cases}
\end{equation}
is a $m$-subharmonic function in $\Omega$, and it is continuous in $\bar\Omega$.
\end{lemma}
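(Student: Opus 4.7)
The plan is to sandwich $u$ between two $\nu$-H\"older barriers and apply the triangle inequality on $\partial \Omega_\delta$. The lower barrier is the given $b$. Since $b|_{\partial \Omega} = u|_{\partial \Omega} = \phi$ and $b \in Lip_\nu(\bar \Omega)$, the function $b$ itself is a $\nu$-H\"older extension of $\phi$ to $\bar \Omega$, so Proposition~\ref{hh} applies with $2\alpha = \nu \in (0,1)$ (so $\alpha = \nu/2 \in (0, 1/2)$). The harmonic envelope
\[
	h(z) = \sup \{ v(z) : v \in SH(\Omega) \cap C(\bar \Omega), \ v|_{\partial \Omega} \leq \phi \}
\]
then belongs to $Lip_\nu(\bar \Omega)$ and is harmonic in $\Omega$ with $h|_{\partial \Omega} = \phi$. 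Since $SH_m \subset SH_1$, the solution $u$ is subharmonic with the same boundary values, so the classical maximum principle yields $u \leq h$ on $\bar \Omega$.

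Next I would fix $z \in \partial \Omega_\delta$ and $\zeta$ with $|\zeta| \leq \delta$. Let $\xi \in \partial \Omega$ realize $dist(z, \partial \Omega) = \delta$, and let $\xi' \in \partial \Omega$ be a closest point to $z + \zeta$; then $|z+\zeta - \xi'| \leq 2\delta$ and, by the triangle inequality, $|\xi - \xi'| \leq 4\delta$. The two barriers yield
\begin{align*}
	u(z) &\geq b(z) \geq b(\xi) - \|b\|_\nu \delta^\nu = \phi(\xi) - \|b\|_\nu \delta^\nu, \\
	u(z+\zeta) &\leq h(z+\zeta) \leq h(\xi') + \|h\|_\nu (2\delta)^\nu = \phi(\xi') + \|h\|_\nu (2\delta)^\nu.
\end{align*}
Subtracting these and bounding $|\phi(\xi) - \phi(\xi')| \leq \|b\|_\nu (4\delta)^\nu$ produces $u(z+\zeta) - u(z) \leq c_0 \delta^\nu$ with $c_0 = c_0(b,\Omega)$ independent of $z$ and $\zeta$. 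Taking the supremum over $|\zeta| \leq \delta$ gives \eqref{he-le3-1}.

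For the second assertion, \eqref{he-le3-1} forces the two formulas defining $\tilde u_\delta$ to agree on $\partial \Omega_\delta$, so $\tilde u_\delta \in C(\bar \Omega)$. On $\Omega_\delta$ the sup-convolution $u_\delta$ is $m$-subharmonic by a Kiselman-type argument (it is the upper envelope of the translates $\{u(\cdot + \zeta) : |\zeta| \leq \delta\}$, each of which is $m$-sh, and its upper semicontinuous regularization coincides with $u_\delta$ since $u$ is continuous), so $\max\{u_\delta, u + c_0 \delta^\nu\}$ is $m$-sh on $\Omega_\delta$. Because this maximum agrees with $u + c_0 \delta^\nu$ on $\partial \Omega_\delta$ from inside and $u + c_0 \delta^\nu$ is $m$-sh on all of $\Omega$, the standard gluing principle for $m$-sh functions (valid since $m$-subharmonicity is a local condition) shows that $\tilde u_\delta$ is $m$-sh on $\Omega$.

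The main obstacle is producing the H\"older upper bound on $u$: only a lower H\"older barrier is assumed, so we must recover H\"older regularity of $\phi$ from $b$, invoke Proposition~\ref{hh} for the harmonic envelope, and then use $SH_m \subset SH_1$ to dominate $u$ by that envelope. The delicate point is the bookkeeping of exponents — namely that Proposition~\ref{hh} delivers $\tau = \nu$ precisely because $\nu < 1$ — and the routine but necessary verification that the sup-convolution preserves $m$-subharmonicity in Euclidean domains.
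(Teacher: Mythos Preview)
Your proof is correct and follows essentially the same route as the paper: produce the harmonic upper barrier $h\in Lip_\nu(\bar\Omega)$ from Proposition~\ref{hh}, sandwich $b\leq u\leq h$, and read off the boundary estimate on $\partial\Omega_\delta$ by H\"older continuity of the two barriers. The only cosmetic difference is that the paper picks a single auxiliary point $z+\zeta_0\in\partial\Omega$ (with $|\zeta_0|=\delta$) and telescopes $h(z+\zeta)-b(z)$ through $h(z)$ and $h(z+\zeta_0)=b(z+\zeta_0)$, whereas you route through two boundary points $\xi,\xi'$; the resulting constants differ but the argument is the same, and the gluing step is handled identically.
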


\begin{proof}
Let $h$ be the harmonic extension to $\Omega$ with $b$ as the boundary value on
$\partial\Omega$.  By Proposition~\ref{hh}
$h \in Lip_\nu (\bar \Omega)$. It is clear that $b \leq u \leq h$ in $\Omega$.
Fix a point $z\in \partial \Omega_\delta$,
there is $\zeta \in \bC^n$ with $\|\zeta\|=\delta$ such that
$z + \zeta \in \bar \Omega$ and $u_\delta(z) = u(z+\zeta)$. This yields
\begin{equation}
\label{he-le3-3}
\begin{aligned}
 u_\delta (z) - u(z)
 	= \;& u(z+\zeta) - u(z) \\
	\leq \;& h(z+\zeta)-u(z) \\
	\leq \;& h(z+\zeta) - b (z).
\end{aligned}
\end{equation}
Now, choose $\zeta_0 \in \bC^n$, $\|\zeta_0\| = \delta$, such that
$z+\zeta_0 \in \partial \Omega$.
It implies that $h(z+\zeta_0) = b(z+\zeta_0)$ as $b=u=h$ on $\partial \Omega$.
Then,
\begin{equation}
\label{he-le3-4}
\begin{aligned}
	h(z+\zeta) - b (z)
	= \;& [h(z+\zeta) -h(z)] + [h(z) - b(z)] \\
	\leq \;& \| h \|_\nu \; \delta^\nu + [h(z) - h(z+\zeta_0)] +[b(z +\zeta_0)-b(z)]\\
	\leq \;& c_0 \; \delta^\nu \;  \mbox{ with } c_0= 2 \|h\|_\nu +\| b\|_\nu .
\end{aligned}
\end{equation}
From \eqref{he-le3-3} and \eqref{he-le3-4} we obtain  \eqref{he-le3-1}.
Properties of $\tilde u_\delta$ in \eqref{he-le3-2} follow from the standard gluing procedure.
\end{proof}
\begin{proof}[Proof of $(a)$ in Theorem~\ref{main}]
Given $0< \gamma < \gamma_2$. Applying Theorem~\ref{se-th-5} with
$\varphi := u+c_0\delta^\nu$, $\psi := \tilde u_\delta$ and $r=2$ we get
$$
	\sup_\Omega [\tilde u_\delta-(u+c_0\delta^\nu)]
		\leq C\|(\tilde u_\delta - u-c_0\delta^\nu)_+\|_{L^2(\Omega)}^\gamma.
$$
Since $\tilde u_\delta = u+ c_0\delta^\nu$ in $\Omega\setminus \Omega_\delta$,
it implies that
$$
	\sup_{\Omega_\delta} (u_\delta -u - c_0\delta^\nu)
		\leq C \|(u_\delta -u - c_0\delta^\nu)_+\|_{L^2(\Omega_\delta)}^\gamma.
$$
As $(u_\delta -u - c_0\delta^\nu)_+ \leq u_\delta -u$ and using \eqref{he-le2-1}, we have
$$
	\sup_{\Omega_\delta} (u_\delta - u) \leq c_0\delta^\nu
		+ C \|u_\delta -u \|_{L^2(\Omega_\delta)}^\gamma
		 \leq c_0  \delta^\nu
		 	+ C.c_n^\frac{\gamma}{2}\|\nabla u\|_{L^2(\Omega)}^\gamma \; \delta^\gamma.
$$
Hence,
\begin{equation}
\label{he-pf-1}
	 \sup_{\Omega_\delta} (u_\delta -u)
	 	\leq c_1 \delta^{\min\{\nu,\gamma\}}
			\mbox{ for } \delta \mbox{ small enough},
\end{equation}
where $c_1= c_0 + C.c_n^\frac{\gamma}{2}\|\nabla u\|_{L^2(\Omega)}^\gamma$.
This finishes the first part of Theorem~\ref{main}
\end{proof}


\begin{proof}[Proof of $(b)$ in Theorem~\ref{main}]
Given $0< \gamma< \gamma_1$. The formula
 \eqref{he-le3-1} implies $\hat u_\delta \leq u_\delta\leq u + c_0\delta^\nu$ on
 $\partial\Omega_\delta$. Therefore, the function
$$
	u'_\delta
		= \begin{cases}
			 \max\{\hat u_\delta, u+ c_0 \delta^\nu\} & \text{ in } \;\; \Omega_\delta, \\
                     		u + c_0 \delta^\nu & \text{ in } \;\; \Omega\setminus \Omega_\delta
             	   \end{cases}
$$
is $m$-subharmonic in $\Omega$, and it is continuous in $\bar\Omega$.
Applying again Theorem~\ref{se-th-5} with
$\varphi := u+c_0\delta^\nu$, $\psi :=  u'_\delta$ and $r=1$ we get
$$
	\sup_\Omega [u'_\delta-(u+c_0\delta^\nu)]
		\leq C\|( u'_\delta - u-c_0\delta^\nu)_+\|_{L^1(\Omega)}^\gamma.
$$
Since $ u'_\delta = u+ c_0\delta^\nu$ in $\Omega\setminus \Omega_\delta$,
it follows that
$$
	\sup_{\Omega_\delta} (\hat u_\delta -u - c_0\delta^\nu)
	 	\leq C \|(\hat u_\delta -u - c_0\delta^\nu)_+\|_{L^1(\Omega_\delta)}^\gamma.
$$
Since $(\hat u_\delta -u - c_0\delta^\nu)_+ \leq \hat u_\delta -u$ and using \eqref{he-le2-2},
we get
$$
	\sup_{\Omega_\delta} (\hat u_\delta - u)
		 \leq c_0\delta^\nu + C \|\hat u_\delta -u \|_{L^1(\Omega_\delta)}^\gamma
		 \leq c_0\delta^\nu + C.c_n^\gamma\|\Delta u\|_{\Omega}^\gamma \; \delta^{2\gamma}.
$$
Hence,
$$
	\sup_{\Omega_\delta} (\hat u_\delta -u)
		\leq c_2 \delta^{\min\{\nu,2\gamma\}}
		\mbox{ for } \delta \mbox{ small enough},
$$
where  $c_2=  c_0 + C.c_n^\gamma\|\Delta u\|_{\Omega}^\gamma$.
Applying Lemma~\ref{he-le-1} one obtains
\begin{equation}
\label{he-pf-2}
	\sup_{\Omega_\delta} ( u_\delta -u)
		\leq c_3 \delta^{\min\{\nu,2\gamma\}}
		\mbox{ for } \delta \mbox{ small enough},
\end{equation}
for some uniform constant $c_3>0$.
This proves the second part.
\end{proof}
Thus, we have finished the proof of Theorem~\ref{main}.
\end{proof}

\bigskip


We proceed to prove Theorem~\ref{mcor}.
We fix a defining function $\rho$ of $\Omega$, setting
\begin{equation}
\label{rho}
\begin{aligned}
	&\Omega=\{\rho<0\}, \;\; \rho\in C^2(\bar \Omega), \\
	&(dd^c \rho)^k \wedge \beta^{n-k}\geq \sigma \beta^n \mbox{ on }\bar \Omega, \\
	&1\leq k \leq m, \;\;  \sigma > 0.
\end{aligned}
\end{equation}
We first prove the following two lemmas. The first one (cf. Proposition~\ref{hh})
was proved in \cite{D1} (see also \cite{BT1}) for $m=n$.

\begin{lemma}
\label{cor-le-1}
If $\phi\in C^{1,1}(\partial \Omega)$, then the upper envelope
$$
	h(z)= \sup\{v(z) : v\in SH_m(\Omega)\cap C(\bar\Omega), v\leq \phi \text{ on } \partial\Omega\}
$$
is a $m$-subharmonic function in $\Omega$
and is Lipschitz continuous in $\bar \Omega$.
It satisfies $h=\phi$  on $\partial\Omega$. Moreover,
\begin{equation}
\label{cor-le1-1}
	 \int_\Omega dd^ch\wedge \beta^{n-1} < +\infty.
\end{equation}
\end{lemma}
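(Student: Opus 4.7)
The plan is to follow the classical Bedford--Taylor / Demailly barrier construction, adapted from the plurisubharmonic ($m=n$) setting to the $m$-subharmonic context, exploiting that the defining function $\rho$ of a strongly $m$-pseudoconvex domain satisfies $(dd^c\rho)^k \wedge \beta^{n-k} \geq \sigma \beta^n$ for $1 \leq k \leq m$. The gain over Proposition~\ref{hh} is that the $C^{1,1}$ hypothesis on $\phi$ together with the $m$-pseudoconvexity will let us build Lipschitz barriers that are genuinely $m$-subharmonic, not just subharmonic.

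First I would extend $\phi$ to some $\tilde\phi \in C^{1,1}(\bar\Omega)$ with $\|D^2 \tilde\phi\|_{L^\infty}<\infty$. I would then construct a lower barrier by setting $\psi = \tilde\phi + A\rho$ for $A$ large enough so that, using Cauchy's interlacing for the eigenvalues of $dd^c\tilde\phi$ versus $A \, dd^c\rho$ and the condition $(dd^c\rho)^k \wedge \beta^{n-k} \geq \sigma\beta^n$, one gets $\psi \in SH_m(\Omega)$. This $\psi$ is Lipschitz on $\bar\Omega$, agrees with $\phi$ on $\partial\Omega$, and so $\psi \leq h$ by definition of $h$. For the upper barrier $\Psi := \tilde\phi - A\rho$ I would argue that for any competitor $v$ in the defining family of $h$, the function $v - \Psi$ is subharmonic on $\Omega$ (since $\Delta(-\tilde\phi + A\rho) \geq 0$ for $A$ large, using that $\Delta\tilde\phi$ is bounded and $\Delta\rho\geq c_0>0$), vanishes on $\partial\Omega$, hence is nonpositive by the classical maximum principle; therefore $h \leq \Psi$. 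This trapping $\psi \leq h \leq \Psi$ with $\psi,\Psi$ Lipschitz and agreeing with $\phi$ on $\partial\Omega$ yields $h \in C(\bar\Omega)$, $h = \phi$ on $\partial\Omega$, and $|h(z) - \phi(\xi)| \leq K|z-\xi|$ whenever $\xi \in \partial\Omega$. The $m$-subharmonicity of $h$ in $\Omega$ is then a standard Choquet upper-envelope argument: the upper semicontinuous regularization $h^*$ belongs to the defining family (being a supremum of $m$-sh functions), so $h^* = h$ and $h \in SH_m(\Omega)$.

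The interior Lipschitz bound is then obtained by the translation trick already used at the end of the proof of Proposition~\ref{hh}: for $|w|$ small, set
\[
V(z,w) = \begin{cases} \max\{h(z+w) - L|w|,\; h(z)\} & z,\,z+w \in \Omega, \\ h(z) & z \in \bar\Omega,\; z+w \notin \Omega,\end{cases}
\]
with $L = \|\Psi\|_{Lip} + \|\psi\|_{Lip}$. The boundary-to-interior Lipschitz estimate from the previous step guarantees that $V(\cdot,w)$ is $m$-sh in $\Omega$ (gluing is legitimate because on the overlap region near $\partial\Omega \cap (\Omega - w)$ we have $h(z+w) - L|w| \leq h(z)$) and $V(\xi,w) \leq \phi(\xi)$ on $\partial\Omega$, whence $V(\cdot,w) \leq h$ by the defining property of $h$. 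Symmetrizing in $w \leftrightarrow -w$ gives $|h(z+w) - h(z)| \leq L|w|$, i.e. $h \in Lip(\bar\Omega)$.

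Finally, for the total Laplacian mass \eqref{cor-le1-1}, I would apply Lemma~\ref{cp-2} to the pair $\psi \leq h$ on $\Omega$ with $\psi = h = \phi$ on $\partial\Omega$ (both continuous on $\bar\Omega$ and $m$-sh in $\Omega$), obtaining
\[
\int_\Omega dd^c h \wedge \beta^{n-1} \leq \int_\Omega dd^c \psi \wedge \beta^{n-1} < +\infty,
\]
the latter being finite because $\psi = \tilde\phi + A\rho \in C^{1,1}(\bar\Omega)$ has an $L^\infty$ Laplacian. The main obstacle will be the Lipschitz gluing argument: one has to confirm that $V(\cdot,w)$ really is $m$-sh across the transition set $\{z \in \Omega : z+w \in \partial\Omega\}$, which requires the uniform boundary Lipschitz estimate $h(z+w) - h(z) \leq L|w|$ provided by the barriers $\psi, \Psi$; once that is in hand the rest of the proof is routine.
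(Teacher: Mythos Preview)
Your proposal is correct and follows essentially the same approach as the paper: the same $C^{1,1}$ extension of $\phi$, the same two-sided barrier $\tilde\phi + A\rho \leq h \leq \tilde\phi - A\rho$, the same envelope/translation trick to propagate the boundary Lipschitz estimate to the interior, and the same appeal to Lemma~\ref{cp-2} for the Laplacian mass. The only notable difference is that the paper extends $h$ to a neighborhood $U\supset\bar\Omega$ by setting $\hat h = A\rho + \hat\phi$ on $U\setminus\Omega$ and uses the translate $\hat h(\cdot+w)-C'|w|$ directly as a competitor, whereas you stay inside $\Omega$ and use the $\max$-gluing $V(\cdot,w)$ from Proposition~\ref{hh}; both variants are standard and equivalent. (Two minor remarks: the phrase ``Cauchy interlacing'' is not the right name for the perturbation argument you have in mind---it is simply openness of the convex cone $\Gamma_m$; and your Choquet step glosses over why $h^*$ is a \emph{continuous} competitor---the paper bypasses this by citing that $h$ coincides with the continuous solution of the homogeneous equation from \cite{DK1}.)
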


\begin{proof}
It is clear that $h \in SH_m(\Omega) \cap C(\bar \Omega)$,
and $h = \phi$ on $\partial \Omega$ because it coincides with
the unique continuous solution to \eqref{heq2}, where $f \equiv 0$.
We consider $\rho$ defined in $\eqref{rho}$.
There is an extension $\hat \phi$ of $\phi$ to a neighborhood $U \supset \bar \Omega$
such that
$\| \hat \phi\|_{C^{1,1}(\bar U)} \leq C \| \phi\|_{C^{1,1}(\partial \Omega)}$,
where $C= C(\Omega, U) >0$
(see \cite{GT}, Lemma 6.38).
Hence, for $A >0$ big enough, $A\rho \pm \hat \phi$ belongs to $SH_m(U)$.
Moreover, we can take $C$ so big that
\begin{equation}
\label{cor-le1-2}
	 \| A \rho \pm \hat \phi\|_{C^{1,1}(\bar U)}
		\leq C \left( 1+ \| \phi\|_{C^{1,1}(\partial \Omega)} \right).
\end{equation}
 The definition of $h$ implies
\begin{equation}
\label{cor-le1-3}
	A \rho + \hat\phi \leq h \leq \hat\phi - A \rho \text{ in } \Omega,
\end{equation}
where the second inequality follows from the maximum principle
for  the subharmonic function $h + A \rho - \hat\phi$ in $\Omega$.
We now extend $h$ to $\bar U$ by putting
\begin{equation}
\label{cor-le1-4}
\hat h(z) = \begin{cases}
			h(z) & \text{ for } z \in \Omega, \\
                            A \rho(z) + \hat\phi(z) & \text{ for } z \in \bar U\setminus \Omega.
                	\end{cases}
\end{equation}
According to \eqref{cor-le1-3} and \eqref{cor-le1-4},
$\hat h \leq \max\{\hat\phi - A\rho, \hat\phi + A\rho\}$ in $\bar U$.
For $\xi \in \partial\Omega$, $|w|$ so small that $\xi+w\in U$, we have
\begin{align*}
	\hat h(\xi+w)
		& \leq \phi( \xi ) + \max\{ \| \hat\phi - A \rho \|_{C^1(\bar U)}, \| \hat\phi
			+ A \rho \|_{C^1(\bar U)}\} |w| \\
            	& \leq \phi( \xi ) + C' |w|,
\end{align*}
where the last inequality follows from \eqref{cor-le1-2} with
$C' = C( 1+\| \phi \|_{C^{1,1}(\partial\Omega)} )$. It implies that
$$
	\hat h(\xi +w) - C'  |w|
		\leq \phi( \xi ) \text{ for every } \xi \in \partial\Omega.
$$
Hence, from the definition of $h$,
$\hat h( z +w) - C'  |w|\leq h(z)$ in $\bar\Omega$.
By changing $w$ into $-w$, we get, for $|w|$ so small that $z+w \in \bar\Omega$,
$$
	|h(z+w) - h(z)| \leq C'  |w| \text{ for } z\in \bar\Omega,
$$
since $\hat h(z)= h(z)$ in $\bar\Omega$.
Thus $\| h \|_{C^1(\bar\Omega)} \leq C'=C(1+\| \phi \|_{C^{1,1}(\partial\Omega)})$,
in particular $h$ is Lipschitz continuous in $\bar\Omega$. Since
$h=\phi$ on $\partial \Omega$ and $A\rho + \hat \phi \leq h$ it implies
$$
	\int_\Omega dd^c h \wedge \beta^{n-1}
		\leq \int_\Omega dd^c (A\rho + \hat \phi)\wedge \beta^{n-1} <+ \infty,
$$
by Lemma~\ref{cp-2}. This verifies \eqref{cor-le1-1}. The proof is finished.
\end{proof}

\begin{lemma}
\label{cor-le-2}
For $0\leq \nu < \frac{1}{2}$, the function $\rho_\nu = - |\rho |^{1-\nu}$,
$\rho$ as in \eqref{rho},
belongs to $SH_m (\Omega)\cap Lip_{1-\nu}(\bar \Omega)$ and satisfies
\begin{equation}
\label{cor-le2-1}
	\int_\Omega d \rho_\nu \wedge d^c \rho_\nu \wedge \beta^{n-1} < +\infty.
\end{equation}
\end{lemma}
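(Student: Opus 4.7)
The proof splits naturally into three tasks: verifying the $m$-subharmonicity of $\rho_\nu$ on $\Omega$, establishing the Hölder estimate on $\bar\Omega$, and controlling the energy integral. Since $\rho_\nu = -|\rho|^{1-\nu}$ is smooth inside $\Omega$ (where $\rho<0$), the first two items reduce to pointwise calculus, while the third is a boundary integrability issue.

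For $m$-subharmonicity, I would write $\rho_\nu = -(-\rho)^{1-\nu}$ and compute directly
$$
dd^c \rho_\nu = (1-\nu)|\rho|^{-\nu}\, dd^c\rho + \nu(1-\nu)|\rho|^{-\nu-1}\, d\rho \wedge d^c\rho.
$$
Both terms are nonnegative: $dd^c\rho$ is even strictly $m$-positive by \eqref{rho}, and $d\rho\wedge d^c\rho = 2i\,\partial\rho\wedge\bar\partial\rho$ is a positive $(1,1)$-form, so $\nu(1-\nu)|\rho|^{-\nu-1} d\rho\wedge d^c\rho$ lies in the positive cone, which sits inside $\Gamma_m$. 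Since the $m$-positive cone is closed under addition and positive scaling, $dd^c\rho_\nu$ is $m$-positive; raising to powers $k\leq m$ and wedging with $\beta^{n-k}$ then gives $(dd^c\rho_\nu)^k\wedge\beta^{n-k}\geq 0$ in $\Omega$, so $\rho_\nu \in SH_m(\Omega)$.

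For the Hölder bound, I would invoke the elementary inequality $\bigl||s|^{1-\nu} - |t|^{1-\nu}\bigr| \leq |s-t|^{1-\nu}$ valid for $s,t\in\bR$ when $0<1-\nu\leq 1$, and combine it with the Lipschitz estimate $|\rho(z)-\rho(w)|\leq \|\nabla\rho\|_\infty |z-w|$ coming from $\rho\in C^2(\bar\Omega)$. This yields $|\rho_\nu(z)-\rho_\nu(w)|\leq \|\nabla\rho\|_\infty^{1-\nu}|z-w|^{1-\nu}$ for all $z,w\in\bar\Omega$, and the boundary condition $\rho_\nu=0$ on $\partial\Omega$ is built in, so $\rho_\nu\in Lip_{1-\nu}(\bar\Omega)$.

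The main obstacle, and the step where the hypothesis $\nu<1/2$ is used, is the energy integral. Differentiating gives $d\rho_\nu = (1-\nu)|\rho|^{-\nu}\, d\rho$, hence
$$
d\rho_\nu \wedge d^c\rho_\nu \wedge \beta^{n-1} = (1-\nu)^2 |\rho|^{-2\nu}\, d\rho\wedge d^c\rho \wedge \beta^{n-1} \leq C_n\,\|\nabla\rho\|_\infty^2\,|\rho|^{-2\nu}\,\beta^n,
$$
reducing the problem to showing $\int_\Omega |\rho|^{-2\nu}\, dV < +\infty$. Since $\rho$ is a $C^2$ defining function with $|\nabla\rho|$ bounded away from $0$ on $\partial\Omega$, one has $|\rho(z)|\asymp \mathrm{dist}(z,\partial\Omega)$ near $\partial\Omega$, so a standard Fubini argument in tubular coordinates along $\partial\Omega$ reduces the question to the one-dimensional integral $\int_0^{\delta_0} t^{-2\nu}\,dt$, which converges exactly when $2\nu<1$. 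This is where the assumption $0\leq \nu<\tfrac{1}{2}$ is essential, and completes the proof.
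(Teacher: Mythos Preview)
Your proof is correct and follows the same route as the paper: both arguments rest on the identical formulas
\[
dd^c\rho_\nu = (1-\nu)|\rho|^{-\nu}\,dd^c\rho + \nu(1-\nu)|\rho|^{-1-\nu}\,d\rho\wedge d^c\rho,
\qquad
d\rho_\nu\wedge d^c\rho_\nu = (1-\nu)^2|\rho|^{-2\nu}\,d\rho\wedge d^c\rho,
\]
together with the observation that $-2\nu>-1$ makes the boundary integral converge. The paper records only these two identities and the exponent condition; you supply the natural supporting details (the Hölder inequality $||s|^{1-\nu}-|t|^{1-\nu}|\le|s-t|^{1-\nu}$ for the Lipschitz bound, and the tubular-neighborhood reduction to $\int_0^{\delta_0} t^{-2\nu}\,dt$ for integrability), so your write-up is simply a fuller version of the same argument.

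One small wording point: when you say the positive $(1,1)$-form $d\rho\wedge d^c\rho$ ``lies in the positive cone, which sits inside $\Gamma_m$'', note that this form has rank one, hence $\sigma_k$ of its eigenvalue vector vanishes for $k\ge 2$, so it lies only in the closure $\bar\Gamma_m$. This is harmless: $\bar\Gamma_m$ is a convex cone, and since the first summand $(1-\nu)|\rho|^{-\nu}\,dd^c\rho$ is in $\bar\Gamma_m$ by \eqref{rho}, the sum is still in $\bar\Gamma_m$, which is exactly what Definition~\ref{su-de-1} requires.
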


\begin{proof}
It follows from formulas
\begin{equation}
\label{cor-le2-2}
	dd^c[-(- \rho)^{1-\nu}]
		= (1-\nu) |\rho |^{-\nu} dd^c\rho
			+ \nu (1-\nu)|\rho |^{-1-\nu} d\rho\wedge d^c\rho,
\end{equation}
and
\begin{equation}
\label{cor-le2-3}
	 d \rho_\nu \wedge d^c \rho_\nu \wedge \beta^{n-1}
		= (1-\nu)^2 |\rho |^{-2\nu} d\rho\wedge d^c\rho \wedge \beta^{n-1}.
\end{equation}
Since $-2\nu>-1$, the integral \eqref{cor-le2-1} converges.
\end{proof}


We are now in the position to prove Theorem~\ref{mcor}. The proof will make use of the envelope
$h$ in Lemma~\ref{cor-le-1}, $\rho_\nu$ in Lemma~\ref{cor-le-2} and $\rho$ from \eqref{rho}.
In what follows, we will use these functions without mentioning them anymore.

\begin{proof}[Proof of $(a)$ in Theorem~\ref{mcor}]
Since $f$ is bounded near the boundary
there is a compact set $F \subset \subset \Omega$ and $M>0$ such that
$0 \leq f \leq M$ in $\Omega \setminus F$.
We may choose $A>0$ big enough such that
$A\rho + h \leq u$ in a neighborhood of $F$,
as $\rho < - \varepsilon$ in $F$ for some $\varepsilon>0$, and
$$
	[dd^c (A\rho +h)]^m \wedge \beta^{n-m}
		\geq (dd^c A\rho )^m \wedge \beta^{n-m}
		\geq f \beta^n
		\;\; \text{ in } \;\; \Omega\setminus F.
$$
Therefore, $A\rho + h \leq u$ in $\Omega \setminus F$ by the comparison principle
(Lemma~\ref{cp-1}).
Therefore,
\begin{equation}
\label{cor-pf-1}
b:= A\rho +h\leq u \mbox{ in } \Omega
\mbox{ and } b \mbox{ is Lipschitz continuous in } \bar\Omega.
\end{equation}
Using Lemma~\ref{cp-2},  \eqref{cor-le1-1} and the fact that
$\rho$ is $C^2$ smooth in a neighborhood of $\bar\Omega$, we get
\begin{equation}
\label{cor-pf-2}
 	\int_\Omega dd^cu \wedge \beta^{n-1}
		\leq \int_\Omega dd^c b \wedge \beta^{n-1} < +\infty.
\end{equation}
According to \eqref{cor-pf-1} and \eqref{cor-pf-2},
the assumptions of Theorem~\ref{main}-(b) are satisfied.
The first part of Theorem~\ref{mcor} follows.
\end{proof}


\begin{proof}[Proof of $(b)$ in Theorem~\ref{mcor}]
From the assumption of $f$ near the boundary,
there is a compact subset $F \subset\subset \Omega$
such that $ f(z) \leq C| \rho|^{-m\nu}$ in $\Omega\setminus F$.
Using \eqref{cor-le2-2} and \eqref{rho}, it follows that
\begin{align*}
 ( dd^c \rho_\nu )^m\wedge \beta^{n-m}
	& \geq (1-\nu)^m (-\rho)^{-m\nu} \sigma \beta^n \\
	& \geq \frac{\sigma(1-\nu)^m}{C} f \beta^n \;\; \text{ in } \;\; \Omega\setminus F.
\end{align*}
Therefore, we may choose $A>0$ so big that
\begin{equation*}
\label{cor-pf2-1}
b_\nu:= A \rho_\nu + h \leq u \mbox{ in a neighborhood of } F,
\end{equation*}
and
\begin{equation*}
\label{cor-pf2-2}
(dd^c b_\nu)^m\wedge \beta^{n-m}
		\geq (dd^c A\rho_\nu)^m \wedge \beta^{n-m}
		\geq f \beta^n \;\; \text{ in }\;\; \Omega \setminus F.
\end{equation*}
Hence, by the comparison principle (Lemma~\ref{cp-1}), we get
\begin{equation*}
\label{cor-pf2-3}
	b_\nu \leq u \mbox{ in } \Omega \setminus F.
\end{equation*}
So,
\begin{equation}
\label{cor-pf2-4}
	b_\nu \leq u \mbox{ in } \Omega \mbox{ and } b_\nu \in Lip_{1-\nu}(\bar\Omega).
\end{equation}
Moreover, Lemma~\ref{cp-2} and \eqref{cor-le2-1} imply
\begin{equation}
\label{cor-pf2-5}
	 \int_\Omega du\wedge d^cu \wedge \beta^{n-1}
 		\leq \int_\Omega d b_\nu \wedge d^c b_\nu \wedge \beta^{n-1} < +\infty.
\end{equation}
According to \eqref{cor-pf2-4} and \eqref{cor-pf2-5},
the assumptions of Theorem~\ref{main}-(a) are satisfied.
Note that $\gamma_2 < \frac{1}{2}  < 1-\nu$.
Thus, the second part in Theorem~\ref{mcor} follows.
\end{proof}

In the last part we consider the homogeneous case of the equation \eqref{heq2},
i.e. the right hand side $f \equiv 0$, when the boundary data is only H\"older continuous.
Similarly to the classical case $m=1$, Proposition~\ref{hh} and the case $m=n$, Theorem 6.2
in \cite{BT1}, it says that any H\"older continuous function on the boundary $\partial\Omega$
can be extended to a  H\"older continuous $m$-subharmonic function in $\Omega$.
For $m=n$, it has been shown in \cite{BT1} that the H\"older exponent is sharp.
\begin{theorem}
 \label{hr}
Let $\Omega$ be a smoothly bounded strongly $m$-pseudoconvex domain
and let $\phi $ belong to $Lip_{2\alpha}(\bar \Omega)$, $0 < \alpha \leq \frac{1}{2}$.
Then the upper envelope
$$
	h_m(z) = \sup \{ v(z): v\in SH_m(\Omega)\cap C(\bar \Omega),
					\;\; v_{|_{\partial \Omega}} \leq \phi\}
$$
is $m$-subharmonic in $\Omega$, and it belongs to $Lip_\alpha(\bar \Omega)$.
 \end{theorem}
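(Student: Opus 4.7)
The plan is to first establish the boundary–interior H\"older estimate $|h_m(z)-\phi(\xi)|\le C|z-\xi|^{\alpha}$ for every $z\in\bar\Omega$, $\xi\in\partial\Omega$, and then upgrade it to H\"older continuity on all of $\bar\Omega$ by exactly the translation/max trick used in the last paragraph of the proof of Proposition~\ref{hh}. The upper side of the boundary estimate comes for free: since $SH_m(\Omega)\subset SH(\Omega)$ (the condition $\sigma_1\ge 0$ is subharmonicity), the Perron family defining $h_m$ is contained in the one defining the harmonic envelope $h$ with data $\phi$, hence $h_m\le h$ in $\Omega$; Proposition~\ref{hh} gives $h\in Lip_{2\alpha}(\bar\Omega)$ (or $Lip_\tau$ for any $\tau<1$ when $2\alpha=1$), so $h_m(z)\le h(\xi)+C|z-\xi|^{2\alpha}=\phi(\xi)+C|z-\xi|^{2\alpha}$.

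For the lower side I would use a one-parameter family of mollified $m$-sh barriers modelled on the construction in Lemma~\ref{cor-le-1}. Extend $\phi$ to some $\hat\phi\in Lip_{2\alpha}(\bC^n)$ with the same H\"older seminorm, set $\phi_\epsilon:=\hat\phi*\chi_\epsilon$ for a standard radial mollifier, and note the usual bounds $\|\phi_\epsilon-\phi\|_{L^\infty(\bar\Omega)}\le C\epsilon^{2\alpha}$ and $\|dd^c\phi_\epsilon\|_{\mathrm{op}}\le C\epsilon^{2\alpha-2}$ (the latter from pairing $D^2\chi_\epsilon$ with H\"older increments of $\hat\phi$ after using $\int D^2\chi_\epsilon=0$). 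Choosing $A_\epsilon:=C_1\epsilon^{2\alpha-2}$ with $C_1$ large enough independent of $\epsilon$, the strict strong $m$-pseudoconvexity $(dd^c\rho)^k\wedge\beta^{n-k}\ge\sigma\beta^n$ together with continuity of the elementary symmetric polynomials $\sigma_k$ ensures that $A_\epsilon dd^c\rho+dd^c\phi_\epsilon$ lies in $\Gamma_m$ pointwise, so
\[
b_\epsilon(z):=A_\epsilon\rho(z)+\phi_\epsilon(z)-C\epsilon^{2\alpha}
\]
belongs to $SH_m(\Omega)\cap C(\bar\Omega)$ and satisfies $b_\epsilon\le\phi$ on $\partial\Omega$, hence $b_\epsilon\le h_m$ on $\Omega$. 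For $z\in\bar\Omega$ and $\xi\in\partial\Omega$, the elementary bounds $|\rho(z)|\le C|z-\xi|$ and $|\phi(z)-\phi(\xi)|\le\|\phi\|_{2\alpha}|z-\xi|^{2\alpha}$ give
\[
h_m(z)-\phi(\xi)\ \ge\ -C'\bigl(\epsilon^{2\alpha}+\epsilon^{2\alpha-2}|z-\xi|+|z-\xi|^{2\alpha}\bigr),
\]
and the optimal choice $\epsilon=|z-\xi|^{1/2}$ yields $h_m(z)\ge\phi(\xi)-C''|z-\xi|^\alpha$. Combined with the upper bound this is the two-sided boundary–interior H\"older estimate, and in particular $h_m$ extends continuously to $\bar\Omega$ with $h_m|_{\partial\Omega}=\phi$.

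With this in hand, the translation trick proceeds exactly as in the proof of Proposition~\ref{hh}: for a small vector $w\in\bC^n$ define $V(z,w):=\max\{h_m(z+w)-K|w|^\alpha,h_m(z)\}$ when both $z,z+w\in\Omega$, and $V(z,w):=h_m(z)$ when $z+w\notin\Omega$. Applying the boundary–interior H\"older estimate at a point $z$ at distance $|w|$ from $\partial\Omega$ shows that for $K$ large enough (depending only on $C''$ and $2^\alpha$) the first branch is dominated by the second in a neighborhood of $\partial\Omega_{|w|}$, so the gluing gives $V(\cdot,w)\in SH_m(\Omega)\cap C(\bar\Omega)$ with $V(\cdot,w)|_{\partial\Omega}\le\phi$, hence $V(\cdot,w)\le h_m$. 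Interchanging $z\leftrightarrow z+w$ yields $|h_m(z+w)-h_m(z)|\le K|w|^\alpha$.

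The main technical obstacle is verifying that $b_\epsilon$ actually lies in $SH_m$ and not merely in $SH$ after being perturbed by $dd^c\phi_\epsilon$: because $A_\epsilon dd^c\rho$ and $dd^c\phi_\epsilon$ are of the same order $\epsilon^{2\alpha-2}$, one cannot afford to lose the strictness in the cone condition, and the constant $C_1$ must be chosen uniformly in $\epsilon$ to dominate the $dd^c\phi_\epsilon$-perturbation inside $\Gamma_m$ (using the multilinear expansion of $\sigma_k$ on $A_\epsilon dd^c\rho+dd^c\phi_\epsilon$). This is precisely where the uniform positivity $(dd^c\rho)^k\wedge\beta^{n-k}\ge\sigma\beta^n$ of the defining function enters; everything else reduces to standard H\"older estimates and the max-gluing principle already used in Proposition~\ref{hh}.
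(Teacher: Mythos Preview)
Your argument is correct, and the overall architecture (harmonic envelope from above, $m$-sh barrier from below, then the translation/max trick of Proposition~\ref{hh}) matches the paper exactly. The genuine difference lies in how the lower barrier is built. The paper writes down, for each $\xi\in\partial\Omega$, the explicit function
\[
b_\xi(z)=-M\bigl(|z-\xi|^2-K\rho(z)\bigr)^{\alpha}+\phi(\xi),
\]
checks by a direct computation that $dd^cb_\xi$ is the sum of a form in $\Gamma_m$ and a nonnegative $(1,1)$-form, and then takes $b(z)=\sup_{\xi}b_\xi(z)$. This single family already has the right H\"older modulus built in, so no optimisation is needed; the $m$-subharmonicity follows from the fact that $K\rho-|z|^2$ is strictly $m$-sh for $K$ large.

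Your route replaces this explicit barrier by the scale-indexed family $b_\epsilon=A_\epsilon\rho+\phi_\epsilon-C\epsilon^{2\alpha}$ obtained by mollifying $\phi$, together with the standard bound $\|D^2\phi_\epsilon\|\le C\epsilon^{2\alpha-2}$ and a final choice $\epsilon=|z-\xi|^{1/2}$. The cone-stability step you flag (choosing $C_1$ so that $C_1dd^c\rho+E\in\Gamma_m$ whenever $\|E\|\le C$) is indeed the crux, and it works because $\Gamma_m$ is an open convex cone and $dd^c\rho$ stays uniformly in its interior on $\bar\Omega$. What you gain is a construction that does not depend on guessing the closed-form barrier and would transplant to other settings where an explicit formula is unavailable; what the paper's construction buys is a shorter, computation-free passage from barrier to H\"older bound, since the exponent $\alpha$ is already present in $b_\xi$ and no balancing of $\epsilon$ against $|z-\xi|$ is required.
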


\begin{proof}
It is enough to verify the H\"older continuity of $h_m(z)$ since $m$-subharmonicity has been shown in \cite{Bl}.
As in the proof of Proposition~\ref{hh}, one needs
the following $m$-subharmonic barrier at any given point on the boundary.
Let $\rho$ be defined in \eqref{rho}.
\begin{lemma}
\label{hr1}
Suppose that $\|\phi\|_{2\alpha} = M$.
There is a uniform constant
$K = K (\Omega, \rho) >0$ such that for any $\xi \in \partial \Omega$ the function
$$
	b_{ \xi }(z) = - M (|z-\xi |^2  - K \rho )^\alpha + \phi (\xi)
$$
is $m$-subharmonic  in $\Omega$ and belongs to $Lip_\alpha( \bar \Omega)$.
Moreover, it is equal to $\phi (\xi)$ at $\xi$, and $b_\xi (z) \leq \phi (z)$
for every $ z\in \partial \Omega$.
\end{lemma}

\begin{proof}[Proof of Lemma~\ref{hr1}]
We have
\begin{align*}
	dd^c (|z-\xi |^2 - K \rho)^\alpha
	\;&	=  \alpha (|z -\xi |^2 - K \rho(z))^{\alpha -1} dd^c (|z -\xi |^2 - K \rho ) \\
	\;&		- \alpha (1- \alpha) (|z-\xi|^2 - K \rho (z))^{\alpha -2}
				d ( |z-\xi |^2 - K \rho)  \wedge d^c ( |z-\xi|^2 - K \rho ).
\end{align*}
Hence, in $\Omega$  
\begin{align*}
	dd^c b_\xi (z) =
		 	 M \alpha (| z-\xi|^2 -K \rho (z) )^{\alpha -1}
			 	dd^c ( K \rho (z) - |z|^2 )
			+ \Lambda (z , \xi),
\end{align*}
where
\begin{align*}
	\Lambda (z, \xi) = M \alpha (1- \alpha) ( |z- \xi |^2
				 - K \rho (z))^{ \alpha -2} d ( | z-\xi|^2 -K \rho) \wedge d^c (|z-\xi |^2 - K \rho)
\end{align*}
is a positive $(1,1)$ form for any $z \in \Omega$.
Thus,
$$
	dd^c b_\xi (z) =
		\Theta (z , \xi )+ \Lambda (z, \xi),
$$
where $\Theta (z , \xi )= M \alpha (| z-\xi|^2 -K \rho (z) )^{\alpha -1}  dd^c [ K\rho(z) - |z|^2] $.
If we choose $K= K(\Omega, \rho)>0$ big enough, independent of $\xi$, then $K \rho (z) -|z|^2$
is a strictly $m$-sh function in a neighborhood of $\bar\Omega$. It implies that
$\Theta (z, \xi ) \in \Gamma_m$, i.e. the eigenvalues of the matrix of coefficients
$\Theta(z, \xi)$ form  a vector in $\Gamma_m$,
 for every $ z\in \Omega$. Hence, for $1 \leq k \leq m$
$$
	[dd^c b_\xi (z)]^k \wedge \beta^{n-k} (z)
		= [ \Theta (z, \xi) + \Lambda (z, \xi)]^k \wedge \beta^{n-k} (z) > 0
$$
for every $z \in \Omega$.
Therefore $b_\xi (z) \in SH_m (\Omega) \cap Lip_\alpha(\bar \Omega)$ by Definition~\ref{su-de-1}.
The other properties  easily follow from the formula for $b_\xi (z)$.  The proof is finished.
\end{proof}

Set
$$
	b(z) := \sup \{ b_\xi(z) : \; \xi \in \partial \Omega\}.
$$
Since $|b_\xi(z) - b_\xi(w)| \leq C |z-w|^\alpha$, $C = C (\phi, K, \rho)$, one has
$b(z) \in SH_m(\Omega) \cap Lip_\alpha(\bar \Omega)$.
Furthermore, from properties of $b_\xi (z)$ we have $b(z )= \phi (z)$ on $\partial \Omega$.
Hence $ b(z) \leq h_m(z)$ in $\Omega$ by the definition of $h_m(z)$.

Let $h_1(z)$ be the harmonic extension of $\phi$ to $\Omega$.
According to Proposition~\ref{hh} we know that
$h_1(z) \in  Lip_\alpha (\bar\Omega)$, and $ h_m(z) \leq h_1 (z) $ in $\Omega$.
Altogether we have
$$
	b(z) \leq h_m (z) \leq h_1 (z) \mbox { in } \Omega, \;\;\;\;
	b(z) = h_m (z) = h_1 (z) \mbox { on } \partial\Omega.
$$
Having these properties, we may repeat the last part of
the proof of  Proposition~\ref{hh} in order to get
that $h_m (z) \in Lip_\alpha (\bar \Omega)$. The theorem follows.
\end{proof}

\bigskip


\begin{thebibliography}{CNS}

\bibitem[BT1]{BT1}
E. Bedford, B.A. Taylor,
The Dirichlet problem for a  complex Monge-Amp\`ere equation,
\it Invent. Math.\rm\ {\bf 37} (1976), 1-44.

\bibitem[BT2]{BT2}
E. Bedford, B.A. Taylor,
A new capacity for plurisubharmonic functions,
\it Acta Math. \rm {\bf 149} (1982), 1-40.

\bibitem[Bl]{Bl}
Z. B\l ocki,
Weak solutions to the complex Hessian  equation,
\it Ann. Inst. Fourier (Grenoble) \rm\ {\bf 55}, 5 (2005), 1735-1756.


\bibitem[Ce]{Ce1}
U. Cegrell, Pluricomplex energy,
 \it Acta Math. \rm {\bf 180:2} (1998), 187-217.


\bibitem[CDS]{CDS}
X. X. Chen, S. Donaldson, S. Sun,
K\"ahler-Einstein metrics on Fano manifolds, I: approximation of metrics with cone singularities,
\it preprint \rm arXiv: 1211.4566.

\bibitem[Ch1]{Chi1}
L. H. Chinh,
Solutions to degenerate Hessian equations,
\it preprint \rm arXiv: 1202.2436v2.

\bibitem[Ch2]{Chi2}
L. H. Chinh,
Viscosity solution to complex Hessian equations,
\it preprint \rm\ arXiv: 1209.5343v2.

\bibitem[De]{D1}
J.P. Demailly,
Potential theory in several complex variable,
\it Lecture notes. \rm\ ICPAM, Nice, 1989.



\bibitem[DDGKPZ]{Demailly et al}
J.-P. Demailly, S. Dinew, V. Guedj, H. H. Pham, S. Ko\l odziej, A. Zeriahi,
H\"older continuous solutions to Monge-Amp\`ere equations,
\it preprint \rm arXiv: 1112.1388v1, to apear in JEMS.

\bibitem[DK1]{DK1}
S. Dinew, S. Ko\l odziej,
A priori estimates for the complex Hessian equations,
 \it preprint\ \rm\ arXiv: 1112.3063.

\bibitem[DK2]{DK2}
S. Dinew, S. Ko\l odziej,
Liouville and Calabi-Yau type theorems for complex Hessian equations,
\it preprint  \rm\ arXiv: 1203.3995v1.

\bibitem[DNS]{DNS}
T. C. Dinh, V. A. Nguyen and N. Sibony,
Exponential estimates for plurisubharmonic functions and stochastic dynamics,
\it Journal Diff. Geom. \rm {\bf 84} (2010), 465-488.

\bibitem[EGZ]{EGZ}
P. Eyssidieux, V. Guedj, A. Zeriahi,
Singular K\"ahler-Einstein metrics,
\it J. Amer. Math. Soc. \rm\ {\bf 22} (2009), 607-639.

\bibitem[Ga]{Ga}
L. G\aa rding,
An inequality for hyperbolic   polynomials,
\it J. Math. Mech.\rm\  {\bf 8} (1959) 957-965.

\bibitem[GKZ]{GKZ}
V. Guedj, S. Ko\l odziej, A. Zeriahi,
H\"older continuous solutions to Monge-Amp\`ere equations,
 \it Bull. Lond. Math. Soc. \rm\ {\bf 40} (2008), 1070-1080.

\bibitem[GT]{GT}
D. Gilbarg, N. Trudinger, Elliptic partial differential equations of second order,
\it Grundl. der Math. Wiss. Springer Verlarg, \rm\ {\bf 244} (1998).

\bibitem[HL]{HanLin}
Q. Han, F. Lin, Elliptic partial differential equations,
\it Courant Lecture Notes in Mathematics, \rm {\bf 1}, New York  University.


\bibitem[Hou]{Hou}
Z. Hou,
Complex Hessian equation on  K\"ahler manifold,
\it Int. Math. Res. Not. \rm {\bf 16} (2009), 3098-3111.

\bibitem[HMW]{HMW}
Z. Hou, X.-N. Ma, D. Wu,
A second order estimate for complex Hessian equations on a compact K\"ahler manifold,
\it Math. Res. Lett. \rm {\bf 17} (2010), 547-561.

\bibitem[J]{J}
A. Jbilou,
\' Equations hessiennes complexes sur des vari\'et\'es k\"ahl\'eriennes,
\it C. R. Math. Acad. Sci. Paris \rm {\bf 348} (2010), 41-46.

\bibitem[Ko]{Kok}
V. N. Kokarev,
Mixed volume forms and a complex equation of
Monge-Amp\`ere type on K\"ahler manifolds of positive curvature,
\it Izv. RAN. Ser. Mat. \rm {\bf 74:3} (2010), 65-78.


\bibitem[K1]{K1}
S. Ko\l odziej,
The complex Monge-Amp\`ere equation.
\it Acta Math. \rm {\bf 180} (1998), no.1, 69-117.

\bibitem[K2]{K2}
S. Ko\l odziej,
The complex Monge-Amp\`ere equation and pluripotential theory,
\it Memoirs Amer. Math. Soc. \rm\ {\bf 178} (2005) 64p.

\bibitem[K3]{K3}
S. Ko\l odziej,
H\"older continuity of solutions to the complex Monge-Amp\`ere equation with the
right hand side in $L^p$. The case of compact K\"ahler manifolds,
\it  Math. Ann. \rm {\bf 342} (2008), 379-386.

\bibitem[Li]{Li}
S.-Y. Li,
On the Dirichlet problems for symmetric function equations of the eigenvalues of the complex Hessian,
 \it Asian J. Math.\rm\ {\bf 8} (2004), 87-106.

\bibitem[N]{Cuong}
N.-C. Nguyen,
Subsolution theorem for the complex Hessian equation,
\it Preprint \rm\ arXiv: 1212.4627, to apear in Universitatis Iagellonicae Acta Mathematica.

\bibitem[PSS]{PSS}
D. H. Phong, J. Song, J. Sturm,
Complex Monge-Amp\`ere equations,
\it Surveys in Differential Geometry, \rm vol {\bf 17} , 327-411 (2012).


\bibitem[W]{Wa2}
X.-J. Wang,
The $k$-Hessian equation,
\it Lect. Not. Math. \rm {\bf 1977} (2009).

\end{thebibliography}
\end{document}